%% file: compile.tex
\documentclass[11pt, reqno]{amsart}

\usepackage{amsmath}
\usepackage[margin=1.00in]{geometry}
\usepackage[foot]{amsaddr}

\usepackage{amssymb}
\usepackage{physics}
\usepackage{mathrsfs}
\usepackage{bbold}

\usepackage{enumerate}         

\usepackage{color}
\usepackage{xcolor}
\usepackage{stmaryrd}
\usepackage{cases}
\usepackage{url} 
\usepackage{amsthm}
\usepackage{hyperref}
\usepackage{bm}
\usepackage{xy}
\usepackage{enumitem}
\usepackage[ruled,vlined]{algorithm2e}
\usepackage{mathrsfs}
\usepackage[gen]{eurosym}
\usepackage{graphicx}
\usepackage{comment}
\usepackage{dsfont}

\usepackage[colorinlistoftodos]{todonotes}

\theoremstyle{plain}
\newtheorem{theorem}{Theorem}[section]

\newtheorem{corollary}[theorem]{Corollary}

\newtheorem{lemma}[theorem]{Lemma}

\newtheorem{proposition}[theorem]{Proposition}

\newtheorem{definition}[theorem]{Definition}

\theoremstyle{remark}
\newtheorem{remark}[theorem]{Remark}

\newcommand{\logit}{\operatorname{logit}}

\newlist{assumptionenum}{enumerate}{1}
\setlist[assumptionenum]{
    label=(\roman*),
    ref=\theassumptionalt.(\roman*)
}
\newlist{assumptioncases}{enumerate}{1}
\setlist[assumptioncases]{
    label=(\alph*),
    ref=\theassumptionalt.(\roman{assumptionenumi}).(\alph*)
}

\numberwithin{equation}{section}

\title[Martingale diffusions as binary posteriors]{Embedding martingale diffusions as binary posteriors in sequential inference} 
\author{Steven Campbell$^a$ and Karl Kristian Engelund$^b$} 
\thanks{SC gratefully acknowledges support from an NSERC Postdoctoral Fellowship (PDF‑599675-2025) and a Columbia University CDFT Research Grant.}
\address{$^a$Dept.\ of Statistics, Columbia University, New York, NY, USA.}
\address{$^b$Dept.\ of Mathematical Sciences, University of Copenhagen, Copenhagen, Denmark}

\email{sc5314@columbia.edu}
\date{\today}

\begin{document}

\begin{abstract}
    \input{abstract}
\end{abstract}
    
	\maketitle
	\vspace{-0.5cm}

	{\small\noindent \emph{Keywords:} posterior martingales; stochastic filtering; sequential inference; Girsanov theorem; Doob $h$-transform; diffusion processes; Lamperti transform; Wright--Fisher diffusion}
	
	{\small\noindent \emph{AMS 2020 Subject Classification:} 60G44; 60H10; 60J60; 62L10; 93E11}

\input{main}
\end{document}

%% file: abstract.tex
Posterior probabilities are bounded martingales, but this fact alone does not identify the experiment that generates them. We show that every time-homogeneous martingale diffusion on $[0,1]$ with strictly positive $C^1$ volatility on the interior and absorbing endpoints, which we call an autonomous win-martingale, can be realized as the Bayesian posterior of an explicit binary sequential experiment observed through a scalar diffusion. Conversely, in a general binary diffusion experiment, the current time and observation are jointly sufficient for the hidden state precisely when the difference between the signal drifts under the two hypotheses satisfies a Riccati equation.  Equivalently, this holds when the likelihood ratio is generated by a Doob $h$-transform associated with a positive space-time harmonic function for the null dynamics. Within this class, when that function is spatially harmonic, the posterior is an autonomous martingale. Outside this harmonic case, autonomy forces the drift difference to be constant, recovering the classical two-state Shiryaev--Wonham filter from binary testing. These results give concrete sequential inference interpretations to a broad class of bounded martingale diffusions, including absorbed Brownian motion, the Wright--Fisher diffusion, and the Aldous martingale of the ``most exciting game.''

%% file: main.tex
\section{Introduction}

A $[0,1]$--valued martingale can be realized as a posterior probability in a number of ways. For instance, given such a martingale $M$ on a horizon $[0,T]$, enlarge the terminal $\sigma$--field by an independent uniform $U$ and set $\theta=\mathbf{1}_{\{U\le M_T\}}$. Invoking the tower property gives $M_t=\mathbb{P}(\theta=1\mid\mathcal{F}_t)$. A related representation is the Bernoulli–Doob approach of Brigo and Vrins \cite{brigovrin2026}, who identify the terminal Bernoulli variable with the martingale's almost-sure limit when $M_\infty\in\{0,1\}$. While constructions in this spirit settle the existence of a posterior representation, they take the martingale as given and do not address what an observer would see to form their beliefs. The question we take up is therefore a different one. Rather than ask whether a binary representation exists, we ask whether the belief can be generated by a structured observation model that an observer learns from as it evolves, and, once that structure is fixed, how much freedom the representation leaves.

The question matters because bounded martingales are routinely adopted as models in their own right. Optimality criteria based on specific distances between martingale laws, a notion going back to Gantert
\cite{gantert1991einige}, have recently produced a catalogue of canonical belief processes. Specific relative entropy selects the Aldous martingale of the ``most exciting game'' of Backhoff-Veraguas and Beiglb\"ock \cite{backhoff-beiglbock}, and related criteria single out, among others, the Bass and Wright--Fisher martingales 
\cite{backhoff2026reciprocal,backhoff2025bass,backhoff2026specific, guo2025randomness}. The idea of rewarding belief movement also appears in information design, where
Ely, Frankel and Kamenica \cite{ely2015suspense} maximize suspense, the
expected variance of the next belief. Bounded martingales also arise directly in applications. Under a pricing measure with zero interest rates, the price of a binary
contract is a $[0,1]$--valued martingale. 
Aldous \cite{aldous2013using}
develops this model for prediction markets and compares it to market data, and
such price paths have since been studied empirically \cite{augenblick2021belief}. In the same vein, conic martingales with prescribed volatility serve as models for survival and credit probabilities \cite{jeanblanc2018conic}. In each of these settings the modeler prescribes dynamics for a belief process, so a martingale may look like a posterior before anything has been said about what is observed.

Our work makes the link concrete for observers who learn through a scalar diffusion. We consider a time-homogeneous martingale diffusion with dynamics $d\Pi_t=\sigma(\Pi_t)\,dB_t$ taking values in $[0,1]$ and absorbed at the endpoints. Such a process declares a winner between the outcomes $0$ and $1$, and its coefficient depends on the state alone, so we call $\Pi$ an \emph{autonomous win-martingale}. Alongside it we place the binary sequential experiment in which a hidden state of nature $\theta\in\{0,1\}$ alters the drift of a scalar observation in Brownian noise: $dX_t=\mu_\theta(X_t)\,dt+dW_t$. Two questions organize the paper. \emph{Can a prescribed autonomous win-martingale be realized as the exact posterior in such an experiment? And which experiments of this form generate posteriors that are themselves autonomous win-martingales?}

Theorem \ref{thm:posterior-embedding} below answers the first question affirmatively and constructively. Every autonomous win-martingale of the above form is the exact Bayesian posterior of an explicit binary diffusion experiment. The construction places $\Pi$ in Lamperti coordinates, where it becomes a unit-volatility diffusion, and conditions its law on the two terminal outcomes. The resulting conditional laws are Doob $h$--transforms carrying explicit state-dependent drifts, and mixing them according to the prior assembles an experiment whose posterior is a time-independent transform of the current signal. The law of this posterior on path space is the prescribed law of $\Pi$, so the embedding recovers the full belief dynamics and not merely the marginals.
 
The same structure becomes a constraint in the reverse direction. Write $\delta=\mu_1-\mu_0$ for the drift gap. Theorem \ref{thm:recognition} shows that the posterior is a smooth function of $(t,X_t)$ if and only if $\delta$ satisfies the Riccati equation
$\delta'(x)+2\mu_0(x)\delta(x)+\delta(x)^2=-\kappa$
for some constant $\kappa$. Equivalently, the Girsanov density collapses from a functional of the entire observed path to the Doob $h$--transform generated by a positive space-time harmonic function of the null generator; that is, a function of the current time and state alone.
 
Theorem \ref{thm:classification}, and the subsequent Corollary \ref{cor:classification}, then classify autonomy. If $\kappa=0$, $X_t$ is sufficient for $\theta$ and every nontrivial experiment in this class produces an autonomous win-martingale, provided $\theta$ is terminally revealed. If $\kappa\neq0$, autonomy forces the gap to be a constant $\Delta$, and, up to the sign of the driving Brownian motion, the posterior has the classical sequential-testing volatility $\sigma(p)=|\Delta|p(1-p)$, the static two-state case of the Shiryaev--Wonham filter for a finite-state Markov chain \cite{shiryaev1967two,wonham1965some}.
 
Together, these results provide both an embedding theorem and an identification principle. They give sequential-inference interpretations to a broad family of bounded martingales, including absorbed Brownian motion, the Wright--Fisher diffusion, and the time-homogeneous version of the Aldous martingale. They also yield an inverse-design procedure that constructs an alternative drift from a chosen null model, producing autonomous posterior dynamics.

The closest related literature we are aware of concerns conic martingale diffusions \cite{brigovrin2026,jeanblanc2018conic}. Namely, in their work \cite{jeanblanc2018conic}, Jeanblanc and Vrins derive constructive representations of bounded martingales
as transformations of unconstrained latent diffusions which are useful for simulation. Moreover, in addition to the aforementioned Bernoulli-Doob representation, Brigo and Vrins \cite{brigovrin2026} study the boundary behavior and long-time limit of time-homogeneous bounded martingale diffusions and show, under a Markov and filtration assumption on the process, that every Bernoulli--Doob martingale is a diffusion. Our analysis
complements this line of work by reconstructing the statistical experiment for which the martingale is the posterior. This filtering viewpoint places the paper in the classical
tradition of sequential inference
\cite{shiryaev1967two,wald1947}. Sequential hypothesis testing is well understood for Brownian drift \cite{gapeev2004wiener,shiryaev1967two} and has been extended to more general processes and loss structures \cite{campbell2025bayesian,gapeev2011sequential}, but which posterior dynamics are attainable, and when they reduce to the tractable time-homogeneous form, has not been treated. By exhibiting a broad family
of bounded martingales as exact posteriors with explicit dynamics, our results
supply interpretable belief processes on which such problems can be posed.

\section{Win-martingales and sequential inference}\label{sec:setup}

Let $(\Omega, \mathcal{F}, \mathbb{F}=(\mathcal{F}_t)_{t\geq0}, \mathbb{P})$ be a filtered probability space satisfying the usual conditions that supports a $(\mathbb{P},\mathbb{F})$--Brownian motion $B=(B_t)_{t\geq0}$. We consider a time-homogeneous martingale diffusion $\Pi=(\Pi_t)_{t\geq0}$ taking values in $[0,1]$ and absorbed upon reaching
either endpoint. For $\sigma\in C^1((0,1);(0,\infty))$  and $\tau=\inf\{t\ge0:\Pi_t\in\{0,1\}\}$ we say that $\Pi$ is an \emph{autonomous win-martingale}  if
\begin{equation}\label{eq:intro-target}
\Pi_t=\pi+\int_0^{t\wedge\tau}\sigma(\Pi_s)dB_s,
  \qquad t\geq0, \qquad \pi\in(0,1).
\end{equation}

\begin{lemma}\label{lem:pi-well-defined}
Equation \eqref{eq:intro-target} admits a pathwise unique strong solution.  Moreover, $\Pi$ is a bounded
uniformly integrable martingale with $\Pi_t\longrightarrow\Pi_\infty\in\{0,1\}$ both almost surely and in $L^1$, and $\mathbb{P}(\Pi_\infty=1)=\pi$.
\end{lemma}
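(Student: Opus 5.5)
Our plan is to establish existence and uniqueness by localization, and to derive the martingale statements from boundedness together with the martingale convergence theorem. The endpoint absorption will then come from a scale-function (or variance) argument.

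\emph{Strong existence and pathwise uniqueness.} Since $\sigma\in C^1((0,1))$, it is locally Lipschitz on $(0,1)$. For each $n$ with $1/n<\pi<1-1/n$, we extend $\sigma|_{[1/n,1-1/n]}$ to a bounded, globally Lipschitz function $\sigma_n$ on $\mathbb{R}$. The SDE $dY=\sigma_n(Y)\,dB$ has a pathwise unique strong solution $Y^{n}$. Set $\tau_n=\inf\{t:Y^n_t\notin(1/n,1-1/n)\}$. By pathwise uniqueness, the solutions $Y^n$ and $Y^{m}$ agree up to $\tau_n\wedge\tau_m$, so the stopping times $\tau_n$ increase and the solutions paste together into a process on $[0,\tau)$, where $\tau=\lim_n\tau_n$.

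On $[0,\tau)$ the pasted process is a stopped local martingale with values in $[0,1]$, hence a bounded martingale. The martingale convergence theorem therefore gives a limit $\Pi_{\tau-}$ on $\{\tau<\infty\}$. Because $\tau_n\uparrow\tau$ and the process exits every compact subinterval before $\tau$, this limit lies in $\{0,1\}$. We then define $\Pi_t=\Pi_{\tau-}$ for $t\ge\tau$. This yields a continuous process solving \eqref{eq:intro-target}, and it is the unique solution because any solution coincides with $Y^n$ on $[0,\tau_n]$.

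\emph{Martingale properties and convergence.} $\Pi$ is a continuous local martingale (a stopped stochastic integral) bounded in $[0,1]$, hence a bounded martingale and in particular uniformly integrable. The martingale convergence theorem gives $\Pi_t\to\Pi_\infty$ almost surely and in $L^1$.

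To show $\Pi_\infty\in\{0,1\}$, we argue as follows. On $\{\tau<\infty\}$ this holds by construction. For $\{\tau=\infty\}$, we use $E[\langle\Pi\rangle_\infty]=E[(\Pi_\infty-\pi)^2]\le 1$. This gives $\int_0^{\tau}\sigma(\Pi_s)^2\,ds<\infty$ almost surely. Suppose that on $\{\tau=\infty\}$ the limit $\Pi_\infty=c\in(0,1)$ with positive probability. Then $\sigma(\Pi_s)^2\to\sigma(c)^2>0$, since $\sigma$ is continuous and positive on $(0,1)$, which makes the integral infinite, a contradiction. Hence $\Pi_\infty\in\{0,1\}$ almost surely.

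Finally, $L^1$ convergence gives $E[\Pi_\infty]=E[\Pi_0]=\pi$, and since $\Pi_\infty$ is $\{0,1\}$-valued, $\mathbb{P}(\Pi_\infty=1)=\pi$.

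The main obstacle is the boundary behaviour: we must make sure that the pasted solution genuinely reaches or approaches an endpoint rather than stalling in the interior. Positivity of $\sigma$ together with the finiteness of the quadratic variation handles this. If $\tau<\infty$ but $\sigma$ blows up near the boundary, the existence of $\Pi_{\tau-}$ is still guaranteed by the bounded martingale convergence argument above.
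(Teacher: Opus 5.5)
Your proof is correct, but it takes a different route from the paper's.

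The paper's route:
\begin{itemize}
\item It obtains a globally defined, non-explosive weak solution of $dZ=\bar\sigma(Z)\,dB$ from the Engelbert--Schmidt theorem, which needs only $\sigma>0$ and $\sigma^{-2}\in L^1_{\mathrm{loc}}((0,1))$, and stops it on leaving $(0,1)$.
\item It gets pathwise uniqueness from local Lipschitz continuity and a strong solution from Yamada--Watanabe.
\item It reads off the boundary behaviour from the natural scale $p(x)=x$ via Karatzas--Shreve, Proposition 5.5.22(d). This yields $\Pi_\infty\in\{0,1\}$ and the probabilities $\pi$, $1-\pi$ at once.
\end{itemize}

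Your route:
\begin{itemize}
\item You construct the strong solution directly by Lipschitz localization and pasting, so strong existence and pathwise uniqueness come together.
\item You replace the scale-function result by an elementary argument: bounded martingale convergence, then the bound $\mathbb E\int_0^\tau\sigma^2(\Pi_s)\,ds\le 1$ to rule out interior limits on $\{\tau=\infty\}$ (using continuity and positivity of $\sigma$), then $\mathbb E[\Pi_\infty]=\pi$.
\end{itemize}
This is more self-contained, and in natural scale it is essentially the mechanism behind the cited proposition.

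The cost is that you must handle the behaviour at $\tau$ by hand, and two steps deserve an extra line.
\begin{itemize}
\item Before $\Pi$ is extended past $\tau$, it is only a local martingale on $[0,\tau)$. So the existence of $\Pi_{\tau-}$ needs justifying, for instance by the uniform-in-$n$ upcrossing bound for the bounded martingales $\Pi_{\cdot\wedge\tau_n}$.
\item Since $\sigma$ may blow up at $0$ and $1$, you need $\int_0^{t\wedge\tau}\sigma^2(\Pi_s)\,ds<\infty$ a.s.\ for the integral in \eqref{eq:intro-target} to be defined. This follows from $\mathbb E\int_0^{t\wedge\tau_n}\sigma^2(\Pi_s)\,ds=\mathbb E[(\Pi_{t\wedge\tau_n}-\pi)^2]\le 1$ and monotone convergence. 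After that, $\Pi_{t\wedge\tau}-\pi$ is the $L^2$ limit of the stopped integrals.
\end{itemize}

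The paper gets both points for free from the globally defined Engelbert--Schmidt solution. Its separation of existence (needing only local integrability of $\sigma^{-2}$) from uniqueness is also more modular with respect to the relaxed regularity the paper mentions later. Your construction, by contrast, already uses local Lipschitz continuity for existence.
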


\begin{proof}
Define the Borel function $\bar{\sigma}\colon\mathbb{R}\to\mathbb{R}$ by $\bar{\sigma}(x)=
\sigma(x)$ if $x\in(0,1)$ and $\bar\sigma(x)=0$ otherwise. Since $\sigma\in C^1((0,1);(0,\infty))$, $\bar\sigma^{-2}\in L^1_{\mathrm{loc}}((0,1))$ and $\bar\sigma>0$ on $(0,1)$. Consequently, every point
at which $\bar{\sigma}^{-2}$ fails to be locally integrable is a zero
of $\bar{\sigma}$.
The Engelbert--Schmidt theorem
\cite[Theorem 5.5.4]{karatzas-shreve} yields a weak
non-explosive solution of $dZ_t=\bar{\sigma}(Z_t)\,dB_t$,  $Z_0=\pi$.
Stopping at its first exit time from $(0,1)$ and absorbing it
there gives a weak absorbed solution of \eqref{eq:intro-target}. Since $\sigma\in C^1((0,1))$, it is locally Lipschitz. A localization
of \cite[Theorem 5.2.5]{karatzas-shreve}, together with
\cite[Remark 5.3.3]{karatzas-shreve}, therefore gives pathwise
uniqueness up to the exit time from $(0,1)$. After absorption at the exit point, pathwise uniqueness holds. A localization of the Yamada--Watanabe theorem
\cite[Corollary 5.3.23]{karatzas-shreve} then yields a pathwise unique
strong solution to \eqref{eq:intro-target}.

Prior to absorption, $\Pi$
is a diffusion on $(0,1)$ in natural scale, with scale function $p(x)=x$.
The hypotheses of
\cite[Proposition 5.5.22(d)]{karatzas-shreve} are satisfied since
$\sigma>0$ and $\sigma^{-2}\in L^1_{\mathrm{loc}}((0,1))$. Since $p(0+)=0$ and $p(1-)=1$,
that proposition, applied to the diffusion on $(0,1)$ up to its
exit $\tau$, yields $\mathbb{P}\left(\lim_{t\uparrow\tau}\Pi_t=0\right)
  =
  1-\pi$, and $\mathbb{P}\left(\lim_{t\uparrow\tau}\Pi_t=1\right)
  =
  \pi$.
Here, $t\uparrow\tau$ is understood as $t\to\infty$ on
$\{\tau=\infty\}$. Since the process is absorbed when it reaches either
endpoint, it follows that $\Pi_t\rightarrow\Pi_\infty\in\{0,1\}$ a.s.~and $\mathbb{P}(\Pi_\infty=1)=\pi$.
As $0\leq\Pi_t\leq1$, dominated convergence implies that
the convergence also holds in $L^1$. 
\end{proof}

\subsection{Binary sequential inference}\label{sec:bin-seq-inf}
To connect $\Pi$ to a statistical problem, we introduce 
the classical setup in continuous-time Bayesian sequential inference. Consider a (possibly different) probability space $(\widetilde{\Omega}, \widetilde{\mathcal{F}},\widetilde{\mathbb{F}}=(\widetilde{\mathcal{F}}_t)_{t\geq0}, \widetilde{\mathbb{P}})$ with $\widetilde{\mathbb{P}}$--expectation $\widetilde{\mathbb{E}}[\cdot]$. Fix an open interval $J=(\ell,r)\subseteq\mathbb R$ with $-\infty\leq\ell<r\leq\infty$ and $x_0\in J$.  We write $\overline{\mathbb R}:=[-\infty,\infty]$ for the extended real line
and denote by $\overline J:=[\ell,r]\subseteq\overline{\mathbb R}$ the extended closure of $J$, equipped with the order topology.

Suppose $\theta\in\{0,1\}$ is a binary $\widetilde{\mathcal{F}}_0$--measurable hidden state with $\theta\sim\mathrm{Bernoulli}(p)$, $p\in(0,1)$. An observer has access to an $\widetilde{\mathbb{F}}$--adapted signal process $X=(X_t)_{t\geq0}$  with continuous
paths (in the order topology) taking values in $\overline{J}$ and absorbed upon reaching $\ell$ or $r$.
Letting $\zeta=\inf\{t\geq0:X_t\not\in J\}$ we assume the evolution
\begin{equation}\label{eq:signal-process}
  X_t=\begin{cases}
      x_0 + \int_0^{t}\mu_\theta(X_s)ds+W_{t}, & t<\zeta\\
      X_\zeta, & t\geq \zeta
  \end{cases}, \qquad t\geq0,
\end{equation}
where $W$ is a $(\widetilde{\mathbb{P}},\widetilde{\mathbb{F}})$--Brownian motion independent of $\theta$ and $\mu_0(\cdot),\mu_1(\cdot)\in C(J;\mathbb{R})$ govern the dynamics of the signal in each state of nature. On $\{\zeta<\infty\}$, $X_\zeta\in\{\ell,r\}$ denotes the endpoint reached in the extended real line.%

\begin{lemma}\label{lem:signal-well-posed}
Equation
\eqref{eq:signal-process} admits a pathwise unique strong solution.
\end{lemma}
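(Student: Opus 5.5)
Since $\theta$ is $\widetilde{\mathcal F}_0$--measurable and independent of $W$, I will condition on $\theta$ and solve two ordinary SDEs. On $\{\theta=i\}$ the equation reads $dX_t=\mu_i(X_t)\,dt+dW_t$ up to the exit time $\zeta$ from $J$, with absorption afterwards. So it suffices to treat, for a fixed continuous drift $b\in C(J;\mathbb R)$, the SDE $dX_t=b(X_t)\,dt+dW_t$ on $J$ up to exit. I will then paste the two solutions together as $X=\mathbf 1_{\{\theta=0\}}X^{(0)}+\mathbf 1_{\{\theta=1\}}X^{(1)}$. Here $X^{(i)}$ is the strong solution driven by $W$ with drift $\mu_i$. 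Strong solutions are given by progressively measurable maps of $(x_0,W)$, and $\theta$ is $\widetilde{\mathcal F}_0$--measurable, so this $X$ is $\widetilde{\mathbb F}$--adapted and solves \eqref{eq:signal-process}. Any two solutions agree on each event $\{\theta=i\}$ by pathwise uniqueness for the fixed-drift equation, which I can apply on $\{\theta=i\}\in\widetilde{\mathcal F}_0$ by restricting the probability space or multiplying by the indicator.

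\emph{Weak existence.} I will mirror the proof of Lemma \ref{lem:pi-well-defined}, using the Engelbert--Schmidt theory for SDEs with drift, namely \cite[Theorem 5.5.15]{karatzas-shreve}. In that setting, unit volatility and continuous $b$ give $(1+|b|)/\sigma^2=1+|b|\in L^1_{\mathrm{loc}}(J)$. This yields a weak solution up to an explosion time $S=\lim_n S_n$, where $S_n$ is the exit time from $[\ell_n,r_n]\uparrow J$. On $\{S<\infty\}$ the path converges in the order topology of $\overline J$ to $\ell$ or $r$, since explosion in that theorem means leaving every compact subset of $J$. Absorbing at this limit gives a continuous $\overline J$--valued weak solution of \eqref{eq:signal-process} with $\zeta=S$. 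If $S=\infty$ there is nothing to absorb.

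\emph{Pathwise uniqueness.} Here the only difficulty is that $b$ is merely continuous, not locally Lipschitz, so the localization used for $\Pi$ does not apply directly. I will instead use the one-dimensional Yamada--Watanabe criterion \cite[Proposition 5.2.13]{karatzas-shreve}. It gives pathwise uniqueness when the diffusion coefficient is constant, since $h(u)=u$ works trivially, and the drift is merely continuous and bounded on compacts. If the cited form needs Lipschitz drift, I will instead use the Zvonkin/Veretennikov result for bounded measurable drift and unit diffusion. Alternatively, I will use the Le Gall/Nakao-type comparison: with unit volatility the local time of $X^1-X^2$ at $0$ vanishes, so Tanaka's formula handles the continuous drift. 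Either route can be localized to $[\ell_n,r_n]$, where $b$ is bounded. Two solutions driven by the same $W$ then coincide up to $S_n$ for every $n$, hence up to $\zeta$, and also after $\zeta$ because both are absorbed at the same limit point.

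\emph{Strong solution and main obstacle.} Weak existence plus pathwise uniqueness (localized as above) gives a strong solution by Yamada--Watanabe \cite[Corollary 5.3.23]{karatzas-shreve}, and the pasting in the first paragraph finishes the proof. I expect the main obstacle to be the uniqueness step under mere continuity of the drift. I would first try the Veretennikov route, which is cleanest for unit diffusion with locally bounded drift. A secondary technical point is that the stopped and absorbed process lives in the extended space $\overline J$, so the localization must be phrased through the stopping times $S_n$ rather than on $\mathbb R$ directly.
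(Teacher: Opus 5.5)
Your argument is correct provided the uniqueness step rests on the Zvonkin--Veretennikov theorem, localized through bounded extensions such as $b((x\vee\ell_n)\wedge r_n)$. The other two tools you list do not work as stated:

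\begin{enumerate}
\item \cite[Proposition 5.2.13]{karatzas-shreve} assumes a Lipschitz drift, so it does not cover a merely continuous $\mu_i$.
\item Tanaka's formula alone is not enough. Since $X^1-X^2=\int_0^\cdot(b(X^1_s)-b(X^2_s))\,ds$ has finite variation, it only gives an ODE-type estimate. Such estimates cannot yield uniqueness for continuous drifts; consider $\dot x=2\sqrt{|x|}$. The Le Gall route closes only after two more steps. First, vanishing local time makes $X^1\vee X^2$ a solution. Second, uniqueness in law, which you already have from Theorem 5.5.15, forces $X^1\vee X^2=X^1$.
\end{enumerate}

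The paper reaches the same end more elementarily. It passes to natural scale, $Y^i=s_i(X^i)$, which removes the drift. The new diffusion coefficient $\sigma_i=s_i'\circ s_i^{-1}$ satisfies $\sigma_i'=-2\mu_i\circ s_i^{-1}$. It is therefore $C^1$, hence locally Lipschitz, precisely because $\mu_i$ is continuous. The proof of Lemma \ref{lem:pi-well-defined} (Engelbert--Schmidt existence, local-Lipschitz pathwise uniqueness, Yamada--Watanabe, absorption) then applies verbatim. The extension $s_i^{-1}\colon\overline{I_i}\to\overline J$ carries both the solution and its absorption back to the signal, and the pasting over $\{\theta=i\}$ is the same as in your first paragraph.

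This scale change is Zvonkin's transformation done by hand, so the two proofs share the same core idea. Yours outsources the regularization-by-noise step to a deeper external theorem, which would also survive weakening continuity of $\mu_i$ to local boundedness. The paper's version stays within standard textbook results and reuses the machinery already built for $\Pi$, including the handling of the extended state space $\overline J$.
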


\begin{proof}
Fix $i\in\{0,1\}$ and define the scale function $s_i(x)
  :=
  \int_{x_0}^x
  \exp\left(
    -2\int_{x_0}^y\mu_i(z)\,dz
  \right)dy$ for $x\in J$.
Then $s_i$ is a strictly increasing $C^2$--diffeomorphism from $J$
onto the open interval $I_i:=s_i(J)$. Set $\sigma_i:=s_i'\circ s_i^{-1}$.
Since $s_i''(x)=-2\mu_i(x)s_i'(x)$,  differentiating gives $\sigma_i'(y)
  =
  s_i''(s_i^{-1}(y))/
       s_i'(s_i^{-1}(y))
  =
  -2\mu_i(s_i^{-1}(y))$ for $y\in I_i$,
and so $\sigma_i\in C^1(I_i;(0,\infty))$. Define $\bar{\sigma}_i(x) = \sigma_i(x)$ for $x\in I_i$ and $\bar{\sigma}_i(x)=0$ otherwise. Arguing as in Lemma \ref{lem:pi-well-defined} we obtain a unique strong solution to the auxiliary equation $dY_t^i=\bar\sigma_i(Y_t^i)\,dW_t$, $Y_0^i=s_i(x_0)$ with absorption at $\zeta_i:=\inf\{t\geq0:Y_t^i\notin I_i\}$.

Note that $s_i$ extends uniquely to the extended closures of $J$ and $I_i$. It\^o's formula then shows that $X_t^i:=s_i^{-1}(Y_{t\wedge \zeta_i}^i)$ satisfies \eqref{eq:signal-process} with $\theta=i$.
Moreover, through its relation with $Y^i$, it is the pathwise unique strong solution of this equation with absorption.
Set $\zeta:=\zeta_0$, $X_t:=X_t^0$ on $\{\theta=0\}$ and $\zeta:=\zeta_1$, $X_t:=X_t^1$ on $\{\theta=1\}$
with $X_t = X_\zeta$ for $t\geq\zeta$.  Since
$\{\theta=i\}\in\widetilde{\mathcal F}_0$, this construction of $X$ is an $\widetilde{\mathbb F}$--adapted strong solution of \eqref{eq:signal-process}. Moreover, pathwise uniqueness for
each $X^i$ implies pathwise uniqueness for $X$.
\end{proof}

Let $\mathbb F^X=(\mathcal F_t^X)_{t\geq0}$ be the usual augmentation,
under $\widetilde{\mathbb P}$, of the natural filtration of $X$. In the context of sequential inference, we are interested in our posterior belief,  $P_t=\widetilde{\mathbb{P}}(\theta=1\mid\mathcal{F}_t^X)=  \widetilde{\mathbb E}[\theta\mid\mathcal F_t^X]$, $t\geq0$,
for which $P_0=p$. Throughout, we identify $P$ with the c\`adl\`ag version of this $[0,1]$--valued martingale. Our next proposition will provide a characterization of this posterior process.

We will need some notation. For
$i\in\{0,1\}$, let
$\widetilde{\mathbb{Q}}^i:=\widetilde{\mathbb P}(\,\cdot\,\vert\theta=i)$ be the conditional measure given $\theta=i$ and set
$\delta:=\mu_1-\mu_0$. Given a compact subset $K=[a,b]$ of $J$ with $x_0\in(a,b)$, we write
$\tau_K:=\inf\{t\geq0:X_t\notin(a,b)\}$. A positive
$\mathbb F^X$--adapted process $Z$ will be called a local
$\widetilde{\mathbb{Q}}^1/\widetilde{\mathbb{Q}}^0$--likelihood-ratio process before $\zeta$ if
$Z_{t\wedge\tau_K}$ is the $\widetilde{\mathbb{Q}}^0$--density of $\widetilde{\mathbb{Q}}^1$ on
$\mathcal F^X_{t\wedge\tau_K}$ for every such $K$ and every finite $t$.

For a stopping time $\tau$, we use $\llbracket 0,\tau\llbracket
  :=
  \{(t,\omega)\in[0,\infty)\times\widetilde\Omega:
    0\leq t<\tau(\omega)\}$
to denote the stochastic interval before $\tau$. Statements holding
indistinguishably on $\llbracket0,\tau\llbracket$ are understood to
hold, outside a single $\widetilde{\mathbb P}$--null set, for every
$t<\tau(\omega)$.
On $\llbracket0,\zeta\llbracket$, define
\begin{equation}\label{eq:likelihood-process}
  L_t
  :=
  \exp\left\{
    \int_0^t\delta(X_s)\,dX_s
    -
    \frac12\int_0^t
      \bigl(\mu_1^2-\mu_0^2\bigr)(X_s)\,ds
  \right\},
  \qquad t<\zeta,
\end{equation}
\begin{equation}\label{eq:mixed-dynamics-general}
  \widehat\mu(X_t, P_t)
  :=
  \mu_0(X_t)+P_t\delta(X_t),
  \quad \text{and} \quad
  \overline W_t
  :=
  X_t-x_0-\int_0^t\widehat\mu(X_s, P_s)\,ds,
  \quad t<\zeta.
\end{equation}
These quantities are well defined since the path of $X$ is contained in $J$ whenever $t<\zeta$.

\begin{proposition}
\label{prop:likelihood-posterior}
The process $L$ is the local $\widetilde{\mathbb{Q}}^1/\widetilde{\mathbb{Q}}^0$--likelihood-ratio process
before $\zeta$ and, indistinguishably on
$\llbracket0,\zeta\llbracket$,
\begin{equation}\label{eq:posterior-odds}
  P_t
  =
  \frac{pL_t}{1-p+pL_t},
  \qquad
  \frac{P_t}{1-P_t}
  =
  \frac{p}{1-p}L_t.
\end{equation}
Moreover, $\overline W$ is an
$(\widetilde{\mathbb P},\mathbb F^X)$--Brownian motion on
$\llbracket0,\zeta\llbracket$ called the \emph{innovation process} and
\begin{equation}\label{eq:filtering-equation}
  P_t
  =
  p+\int_0^t
    P_s(1-P_s)\delta(X_s)\,d\overline W_s,
  \qquad t<\zeta.
\end{equation}
\end{proposition}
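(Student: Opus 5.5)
The argument has three parts. First, a localized Girsanov argument identifies $L$ as the likelihood ratio. Second, Bayes' formula gives the posterior representation \eqref{eq:posterior-odds}. Third, It\^o's formula combined with L\'evy's characterization yields the filtering equation and the innovation property.

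\emph{Step 1 (likelihood ratio).} Fix a compact $K=[a,b]\subset J$ with $x_0\in(a,b)$. On $[0,\tau_K]$ the path of $X$ stays in $K$, where $\mu_0,\mu_1,\delta$ are bounded by continuity. Under $\widetilde{\mathbb Q}^0$ we have $dX_t=\mu_0(X_t)dt+dW_t$ before $\zeta$. Substituting this into \eqref{eq:likelihood-process} gives
\begin{equation*}
L_{t\wedge\tau_K}=\exp\Bigl\{\int_0^{t\wedge\tau_K}\delta(X_s)\,dW_s-\tfrac12\int_0^{t\wedge\tau_K}\delta(X_s)^2\,ds\Bigr\},
\end{equation*}
since $\delta\mu_0-\tfrac12(\mu_1^2-\mu_0^2)=-\tfrac12\delta^2$. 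The integrand is bounded, so Novikov's condition holds and $L_{\cdot\wedge\tau_K}$ is a true $\widetilde{\mathbb Q}^0$--martingale with mean one. Define $\mathbb R^K$ by $d\mathbb R^K/d\widetilde{\mathbb Q}^0=L_{t\wedge\tau_K}$. By Girsanov, $X$ stopped at $\tau_K$ solves the $\mu_1$-equation under $\mathbb R^K$. Lemma \ref{lem:signal-well-posed} gives pathwise uniqueness, hence uniqueness in law via Yamada--Watanabe. The law of $X_{\cdot\wedge\tau_K\wedge t}$ under $\mathbb R^K$ therefore coincides with its law under $\widetilde{\mathbb Q}^1$. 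We need this for the stopped equation, which we obtain by applying uniqueness to the equation with drift modified outside $K$.

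Because $L_{t\wedge\tau_K}$ is $\mathcal F^X_{t\wedge\tau_K}$--measurable, it is the $\widetilde{\mathbb Q}^0$--density of $\widetilde{\mathbb Q}^1$ on $\mathcal F^X_{t\wedge\tau_K}$. The augmentation must be handled here: both $\widetilde{\mathbb Q}^i$ are absolutely continuous with respect to $\widetilde{\mathbb P}$, so null sets are harmless. This identification step is the one I expect to be the main obstacle. The delicate points are the careful transfer from equality of path laws to equality of measures on the augmented stopped $\sigma$--field, and the measurability of the stochastic integral as a functional of $X$. For the latter, one can take a version of $\int\delta(X)dX$ adapted to $\mathbb F^X$, since $X$ is a semimartingale in its own filtration.

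\emph{Step 2 (Bayes).} For $A\in\mathcal F^X_{t\wedge\tau_K}$,
\[
\widetilde{\mathbb P}(\theta=1,A)=p\,\widetilde{\mathbb E}^{\widetilde{\mathbb Q}^0}[L_{t\wedge\tau_K}\mathbf 1_A]
\quad\text{and}\quad
\widetilde{\mathbb P}(A)=\widetilde{\mathbb E}^{\widetilde{\mathbb Q}^0}[(1-p+pL_{t\wedge\tau_K})\mathbf 1_A].
\]
The abstract Bayes formula then gives $P_{t\wedge\tau_K}=pL/(1-p+pL)$ a.s. at each $t$. Both sides are continuous on $\llbracket0,\zeta\llbracket$: $P$ is c\`adl\`ag, and the right side is continuous. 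Hence they agree indistinguishably on $\llbracket0,\tau_K\rrbracket$. Letting $K\uparrow J$ along a countable sequence, $\tau_K\uparrow\zeta$, which yields \eqref{eq:posterior-odds} on $\llbracket0,\zeta\llbracket$.

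\emph{Step 3 (innovations and filter).} Write $dX=\mu_\theta(X)dt+dW$. Then
\[
\overline W_t=W_t+\int_0^t(\theta-P_s)\delta(X_s)\,ds .
\]
Stopped at $\tau_K$ this is $\mathbb F^X$--adapted and has quadratic variation $t$. For the martingale property, fix $s<t$. Conditioning on $\mathcal F^X_s$, the $W$ increment has zero conditional mean because $W$ is an $\widetilde{\mathbb F}$--Brownian motion and $\mathcal F^X_s\subset\widetilde{\mathcal F}_s$. For the drift term, Fubini and the tower property give $\widetilde{\mathbb E}[(\theta-P_u)\delta(X_u)\mid\mathcal F^X_s]=0$ for $u\ge s$, since $\widetilde{\mathbb E}[\theta\mid\mathcal F^X_u]=P_u$. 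L\'evy's characterization, applied up to each $\tau_K$, makes $\overline W$ a Brownian motion on $\llbracket0,\zeta\llbracket$.

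Finally, $\log L_t=\int\delta\,dX-\tfrac12\int(\mu_1^2-\mu_0^2)\,ds$. Apply It\^o's formula to $f(\ell)=p\ell/(1-p+p\ell)$, or equivalently to $\operatorname{logit}^{-1}$ of $\log\frac{p}{1-p}+\log L$. The $dt$ terms cancel once $dX=\widehat\mu\,dt+d\overline W$ is substituted. Indeed, writing $y$ for the log-odds,
\[
dP=P(1-P)\,dy+\tfrac12P(1-P)(1-2P)\,d\langle y\rangle ,
\]
with $dy=\delta(d\overline W+(\mu_0+P\delta)dt)-\tfrac12(\mu_1^2-\mu_0^2)dt$. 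The resulting drift is $P(1-P)\delta\bigl[\mu_0+P\delta-\tfrac12(\mu_0+\mu_1)+\tfrac12(1-2P)\delta\bigr]=0$. This leaves \eqref{eq:filtering-equation}.
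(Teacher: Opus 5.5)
Your proof is correct, but it is more self-contained than the paper's. The paper localizes by building, for each compact $K$, a \emph{global} model $X^{(K)}$ with bounded extended drifts $\mu_i^{(K)}$. It quotes the likelihood formula and the general filtering theorem of \cite{liptser-shiryaev} (Theorems 7.1 and 8.1) for that model. It then transfers the likelihood ratio, the posterior and the innovation back to the original model, using pathwise uniqueness ($X^{(K)}=X$ up to $\tau_K$), the stopped-filtration identity $\mathcal F^{X^{(K)}}_{t\wedge\tau_K}=\mathcal F^X_{t\wedge\tau_K}$, and optional sampling to get $P^{(K)}_{t\wedge\tau_K}=P_{t\wedge\tau_K}$.

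You instead stay inside the original model throughout:
\begin{itemize}
\item Girsanov with Novikov on $[0,\tau_K]$ gives the density between $\widetilde{\mathbb Q}^0$ and $\widetilde{\mathbb Q}^1$ directly. The modified-drift equation is used only for uniqueness in law of the stopped path, via the strong solution as a functional of the driving Brownian motion.
\item Bayes' formula is applied directly on $\mathcal F^X_{t\wedge\tau_K}$, together with optional sampling of $P$.
\item The innovation property is checked by hand, via conditional Fubini and L\'evy's characterization.
\item The filter follows from It\^o's formula applied to the explicit representation $\logit P=\logit p+\log L$.
\end{itemize}

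What your route buys is transparency. In the binary case the Bayes (Kallianpur--Striebel) representation reduces the filtering equation to a short It\^o computation. You never need the martingale-representation machinery behind the general filtering theorem, and you never compare posteriors of two different signal processes. The paper's route buys brevity, at the price of the $X^{(K)}\leftrightarrow X$ transfer.

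The one technical ingredient both arguments genuinely share is identifying the augmented stopped $\sigma$-field $\mathcal F^X_{t\wedge\tau_K}$ with $\sigma(X_{\cdot\wedge t\wedge\tau_K})$ up to $\widetilde{\mathbb P}$-null sets. This is a Galmarino-type fact, and the paper invokes a stopped-filtration identity \cite[Lemma~2.32]{bain-crisan} at the corresponding point. You rightly flag it as the delicate step, but you should cite or prove it rather than leave it as an anticipated obstacle.

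A small wording point in Step 2: passing from fixed times to indistinguishability only needs right-continuity of both sides, which you have. It does not need continuity of $P$.
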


\begin{proof}
This is a localization of standard results in filtering theory (cf.~\cite{bain-crisan,liptser-shiryaev}) %
that can be justified as follows. Fix a compact $K=[a,b]$ in $J$ as above with $x_0\in(a,b)$ and define bounded
continuous extensions of the drifts by
$\mu_i^{(K)}(x):=\mu_i((x\vee a)\wedge b)$, $x\in\mathbb R$ and
$i\in\{0,1\}$. Let $X^{(K)}$ be the global signal obtained by
replacing $\mu_i$ with $\mu_i^{(K)}$ and $J$ with $\mathbb R$, driven
by the same pair $(\theta,W)$, and define
$P^{(K)},L^{(K)}$ and $\overline W^{(K)}$ analogously.

On every \emph{finite horizon}, the likelihood formula
\cite[Theorem~7.1 and the subsequent corollary]{liptser-shiryaev},
applied under $\widetilde{\mathbb Q}^0$ and $\widetilde{\mathbb Q}^1$ and
followed by Bayes' formula, gives the likelihood-ratio and posterior
identities in \eqref{eq:posterior-odds} for the bounded model.
Moreover, the general filtering theorem
\cite[Theorem~8.1]{liptser-shiryaev}, applied with estimated process
$h_t\equiv\theta$, observation drift
$A_t=\mu_\theta^{(K)}(X_t^{(K)})$, and $B\equiv1$, shows that
$\overline W^{(K)}$ is an observation Brownian motion and that
\[
  dP_t^{(K)}
  =
  P_t^{(K)}(1-P_t^{(K)})
  \bigl(\mu_1^{(K)}-\mu_0^{(K)}\bigr)(X_t^{(K)})
  \,d\overline W_t^{(K)}.
\]
In the notation of that theorem, $H=D=0$ so the expression reduces to the above.

Let
$\tau^{(K)}:=\inf\{t\geq0:X_t^{(K)}\notin(a,b)\}$. Since the original
and extended coefficients agree on $K$, pathwise uniqueness gives
$\tau^{(K)}=\tau_K$ and
$X^{(K)}_{t\wedge\tau_K}=X_{t\wedge\tau_K}$ for every $t\geq0$.
The stopped-filtration identity
\cite[Lemma~2.32]{bain-crisan} therefore gives
$\mathcal F_{t\wedge\tau_K}^{X^{(K)}}
 =\mathcal F_{t\wedge\tau_K}^{X}$, and optional sampling yields
$P_{t\wedge\tau_K}^{(K)}=P_{t\wedge\tau_K}$. The definitions then
also give
$L_{t\wedge\tau_K}^{(K)}=L_{t\wedge\tau_K}$ and
$\overline W_{t\wedge\tau_K}^{(K)}
 =\overline W_{t\wedge\tau_K}$. Hence all the asserted identities
hold up to $\tau_K$.

Finally, along an increasing compact exhaustion of $J$, the
corresponding exit times increase to $\zeta$, and the stopped
identities agree on overlaps; ergo on
$\llbracket0,\zeta\llbracket$.
\end{proof}

\begin{remark}\label{rmk:consistency}
Since $P$ is a bounded martingale, there exists an
$\mathcal F_\infty^X$--measurable random variable $P_\infty$ such that
$P_t\to P_\infty$ a.s.~and in $L^1$, where
$\mathcal F_\infty^X=\bigvee_{t\geq0} \mathcal{F}^X_t$.
L\'evy's upward theorem gives $P_\infty=\widetilde{\mathbb E}
  \bigl[\theta\mid\mathcal F_\infty^X\bigr]$.
Consequently, if $P_\infty\in\{0,1\}$ a.s.,~then $\widetilde{\mathbb E}
  \left[
    (\theta-P_\infty)^2
    \,\middle|\,
    \mathcal F_\infty^X
  \right]
  =
  P_\infty(1-P_\infty)
  =
  0$,
and hence $P_\infty=\theta$ a.s. A sufficient condition for $P_\infty\in\{0,1\}$ is that $\int_0^\zeta \delta(X_s)^2\,ds=\infty$ a.s.~(understood as an improper integral on $\llbracket 0, \zeta\llbracket$). Indeed, under $\widetilde{\mathbb Q}^i$, $i\in\{0,1\}$,
$\log L_t
  =
  M_t^i
  +
  \left(i-\frac12\right)
  \int_0^t\delta(X_s)^2ds$ for $t<\zeta$
where $M^i$ is a continuous local martingale with $\langle M^i\rangle_t
  =
  \int_0^t\delta(X_s)^2ds$.
The Dambis--Dubins--Schwarz theorem
\cite[Theorem~3.4.6]{karatzas-shreve} and the Brownian strong law
therefore imply that $L_t\to0$ under $\widetilde{\mathbb Q}^0$ and
$L_t\to\infty$ under $\widetilde{\mathbb Q}^1$ as $t\uparrow\zeta$.
The identity \eqref{eq:posterior-odds} then yields
$P_\infty=\theta$ a.s.
\end{remark}

The goal of the ensuing sections is to relate autonomous win-martingales to posterior processes arising from binary sequential inference. The preceding lemmas establish the well-posedness of both models. In particular, they provide pathwise unique strong solutions to \eqref{eq:intro-target} and \eqref{eq:signal-process}, and hence uniquely determined laws on the corresponding path spaces. This allows us to treat each model as a canonical object.

\begin{remark}
    It is possible to relax the regularity requirements on $\sigma$, $\mu_0$, and $\mu_1$. Indeed, most of the arguments admit extensions when $\sigma$ is positive and locally absolutely continuous, $\mu_0,\mu_1$ are finite
    Borel functions in $L^1_{\mathrm{loc}}(J)$, and
    $\delta=\mu_1-\mu_0$ is locally bounded. 
    We have abstained from adopting this as the default setting since the main ideas are essentially unchanged, but the technical burden is substantially increased.
\end{remark}

\subsection{Doob \texorpdfstring{$h$}{h}--transforms}\label{sec:doob}
Before proceeding we briefly recall the form of the Doob $h$--transform that appears repeatedly in what follows; see, for example, \cite{rogerswilliams2000}. Let $(\Omega^\circ,\mathcal A^\circ,\mathbb F^\circ,\mathbb P^\circ)$ be a filtered probability space satisfying the usual conditions, where $\mathbb F^\circ=(\mathcal A_t^\circ)_{t\geq0}$, and let $B$ be an $(\mathbb F^\circ, \mathbb P^\circ)$--Brownian motion. Suppose that, for some open interval $I\subseteq\mathbb R$ and $y\in I$, the diffusion $Y$ satisfies $dY_t=\nu(Y_t)\,dt+dB_t$, $Y_0=y$, up to its lifetime $\eta:=\inf\{t\geq0:Y_t\notin I\}$ where $\nu\in C(I;\mathbb{R})$. For every compact interval $K=[a,b]\subset I$ with $y\in(a,b)$, let $\eta_K:=\inf\{t\geq0:Y_t\notin(a,b)\}$.
As in the preceding setup, all changes of measure and stochastic dynamics below are understood locally before $\eta$, by stopping at $\eta_K$. 

A function \(\widehat h\in C^{1,2}([0,\infty)\times I;(0,\infty))\) is called \emph{space-time} harmonic for the generator $\mathcal L^\circ f = \nu f'+\frac12f''$ of $Y$ on $I$ if it satisfies $(\partial_t+\mathcal L^\circ)\widehat h=0$. In this case, It\^o's formula shows that $Z_t
:=
\widehat h(t,Y_t)/\widehat h(0,y)$
for $t<\eta$ is a positive local martingale with $Z_0=1$. For every compact \(K\) as above $Z_{t\wedge\eta_K}$ is a true exponential martingale on every finite horizon and therefore defines a change of measure on $\mathcal A_{t\wedge\eta_K}^\circ$. The resulting change of measure before $\eta$ is called the \emph{Doob $\widehat h$--transform} of $\mathbb P^\circ$.
If $Z$ is a true martingale, then $(\mathrm d\mathbb P^{\circ,\widehat h}/\mathrm d\mathbb P^\circ)|_{\mathcal A_t^\circ}=Z_t$, $t\geq0$, defines a consistent family of probability measures on $(\mathcal A_t^\circ)_{t\geq0}$.
A special case arises when $h\in C^2(I;(0,\infty))$ satisfies $\mathcal L^\circ h=\lambda h$
for some $\lambda\in\mathbb R$. Then $\widehat h(t,x)=e^{-\lambda t}h(x)$
is space-time harmonic, and the corresponding density is
$Z_t
=
e^{-\lambda t}h(Y_t)/h(y)
$.
When $\lambda=0$, the function $h$ is $\mathcal L^\circ$--harmonic and the exponential factor disappears.

The space-time transform has a conditioning interpretation whenever its density is generated by a positive-probability event. In the globally defined case, suppose $E\in\mathcal A^\circ$ satisfies $\mathbb P^\circ(E)>0$ and $\mathbb P^\circ(E\mid\mathcal A_t^\circ)=\widehat h(t,Y_t)$ for every $t\geq0$. Then $\mathbb P^\circ(E)=\widehat h(0,y)$ and
\begin{equation}\label{eq:doob-conditioning}
    \mathbb P^\circ(A\mid E)
    =
    \mathbb E^\circ\left[
        \mathbf 1_A
        (\widehat h(t,Y_t) /\widehat h(0,y))
    \right], \qquad \forall A\in\mathcal{A}^\circ_t.
\end{equation}
Thus, on each $\mathcal A_t^\circ$, conditioning on $E$ coincides with the Doob $\widehat h$--transform.

\section{Statistical embedding of win-martingales}\label{sec:embedding}

Our posterior embedding is constructive and provides a satisfying intuition for the relationship to sequential inference. Our starting point is $\Pi$ from \eqref{eq:intro-target} and we inherit all the notation of Section \ref{sec:setup} here. 

The first step is to transform $\Pi$ so that the resulting process is a Markovian diffusion with unit volatility that resembles $X$. In order to do this, we will need to introduce two auxiliary functions. Let $F$ be a Lamperti transform associated with $\Pi$, defined by 
\begin{equation}\label{eq:F-def}
  F(p)=\int_{1/2}^p \frac{1}{\sigma(u)}du+C, \qquad p\in(0,1),
\end{equation}
for some $C\in\mathbb R$.
Its image $I=F((0,1))$ is an open interval with possibly infinite endpoints $-\infty\leq \lambda< \rho\leq\infty$, and $F:(0,1)\to I$ is a $C^2$ diffeomorphism. Denote its inverse by $G:I\to(0,1)$ and note that $G$ can naturally be extended to $\overline{I}=[\lambda,\rho]\subseteq \overline{\mathbb{R}}$ with the identification $G(\lambda) = 0$ and $G(\rho)=1$. $F$ extends analogously to $[0,1]$.

Define the candidate process $Y_t:=F(\Pi_t)$. Since $F$ is invertible, the augmented natural filtrations of $Y$ and
$\Pi$ coincide. Moreover, $F'(p)=1/\sigma(p)$ and $F''(p)=-\sigma'(p)/\sigma^2(p)$,
so It\^o's formula gives
\begin{equation}\label{eq:Y-under-P}
  dY_t
  =
  -\frac12\sigma'(G(Y_t))\,dt+dB_t,
  \qquad t<\tau.
\end{equation}
Thus, prior to absorption, $Y$ is a time-homogeneous Markov diffusion
with unit diffusion coefficient.
In order for $Y$ to play the role of the signal $X$ in Section \ref{sec:bin-seq-inf}, we need to construct two new measures $\mathbb{Q}^0$ and $\mathbb{Q}^1$ on $(\Omega, \mathcal{F},\mathbb{F})$ that will serve as $\widetilde{\mathbb{Q}}^0$ and $\widetilde{\mathbb{Q}}^1$. 

To this end we draw inspiration from the consistency property of the posterior $P$ (Remark \ref{rmk:consistency}) when there is enough information in the experiment. We observe that,
in an experiment that ultimately reveals the truth, conditioning on the hidden state is equivalent to conditioning on the terminal value reached by the posterior process. 

This suggests reversing the construction. Let $\mathbb{F}^\Pi=(\mathcal{F}^\Pi_t)_{t\geq0}$ be the augmented natural filtration of $\Pi$. Starting from $\Pi$ in \eqref{eq:intro-target}, we use its terminal value to define the two conditional laws that will subsequently be reassembled as the two states of a sequential experiment. By
Lemma \ref{lem:pi-well-defined},
$\Pi_\infty\in\{0,1\}$ a.s.~and
$\mathbb P(\Pi_\infty=1)=\pi$. Moreover, uniform integrability gives $\Pi_t
  =
  \mathbb E
  \bigl[\Pi_\infty\mid\mathcal F_t^\Pi\bigr]
  =
  \mathbb P
  \bigl(\Pi_\infty=1\mid\mathcal F_t^\Pi\bigr)$.
Thus, $\Pi$ is already the posterior process associated with its own
terminal value in the filtration $\mathbb F^\Pi$. 

In view of this, the natural choices for $\mathbb Q^0$ and
$\mathbb Q^1$ are the conditional laws under which $\Pi_{\infty}$ is $0$ and $1$, respectively. For $A\in\mathcal F$,
define $\mathbb Q^1(A)
  :=
  \mathbb P(A\mid\Pi_\infty=1)$, and $\mathbb Q^0(A)
  :=
  \mathbb P(A\mid\Pi_\infty=0)$.
Since $\Pi_\infty\in\{0,1\}$ a.s.~and
$\mathbb P(\Pi_\infty=1)=\pi$, for every $A\in\mathcal F$,
\[
\begin{aligned}
  \mathbb Q^1(A)
  &=
  \frac{\mathbb P(A\cap\{\Pi_\infty=1\})}{\pi}
  =
  \frac{1}{\pi}
  \mathbb E\left[
    \mathbf 1_A\mathbf 1_{\{\Pi_\infty=1\}}
  \right]=
  \mathbb E\left[
    \mathbf 1_A\frac{\Pi_\infty}{\pi}
  \right],
\end{aligned}
\]
and similarly, $\mathbb Q^0(A)
  =
  \mathbb E\left[
    \mathbf 1_A (1-\pi)^{-1}(1-\Pi_\infty)
  \right]$.
Consequently, 
\begin{equation}\label{eq:embedding-terminal-densities}
  \frac{d\mathbb Q^1}{d\mathbb P}
  =
  \frac{\Pi_\infty}{\pi},
  \qquad
  \frac{d\mathbb Q^0}{d\mathbb P}
  =
  \frac{1-\Pi_\infty}{1-\pi}, \quad \text{and} \quad \mathbb P=(1-\pi)\mathbb Q^0+\pi\mathbb Q^1.
\end{equation}
Taking conditional expectations in
\eqref{eq:embedding-terminal-densities} and using uniform integrability yields
\begin{equation}\label{eq:embedding-densities}
  Z_t^1
  :=
  \frac{d\mathbb Q^1}{d\mathbb P}
  \bigg|_{\mathcal F_t}
  =
  \frac{\Pi_t}{\pi},
  \qquad
  Z_t^0
  :=
  \frac{d\mathbb Q^0}{d\mathbb P}
  \bigg|_{\mathcal F_t}
  =
  \frac{1-\Pi_t}{1-\pi}.
\end{equation}

\begin{remark}\label{rmk:doob_h_embedding}
The restrictions of the preceding conditional laws to $\mathbb F^\Pi$ are the classical \emph{Doob $h$--transforms} of $\Pi$ (see Section \ref{sec:doob}). Indeed, $h_1(x):=x$ and $h_0(x):=1-x$ are strictly positive and $\mathcal L$--harmonic on
$(0,1)$, where $\mathcal Lf(x)=\frac12\sigma^2(x)f''(x)$ is the generator of $\Pi$ in $(0,1)$. As $\mathbb P(\Pi_\infty=i\mid\mathcal F_t^\Pi) =
  h_i(\Pi_t)$ for $i\in\{0,1\}$, \eqref{eq:doob-conditioning} shows that the Doob $h_i$--transform coincides with conditioning the absorbed diffusion on the event $\{\Pi_\infty=i\}$.
\end{remark}

We now identify the dynamics of $Y$ under $\mathbb{Q}^0$ and $\mathbb{Q}^1$. For $n$ sufficiently large that
$\pi\in(1/n,1-1/n)$, let $\tau_n
  :=
  \inf\{t\geq0:\Pi_t\notin(1/n,1-1/n)\}$.
Then $\tau_n\uparrow\tau$. By optional sampling,
$Z^i_{t\wedge\tau_n}$ is the density of $\mathbb Q^i$ with respect to
$\mathbb P$ on $\mathcal F_{t\wedge\tau_n}$. Moreover,
\[
  Z^1_{\cdot\wedge\tau_n}
  =
  \mathcal E\left(
    \int_0^{\cdot\wedge\tau_n}
    \frac{\sigma(\Pi_s)}{\Pi_s}\,dB_s
  \right), \quad \text{and} \quad
  Z^0_{\cdot\wedge\tau_n}
  =
  \mathcal E\left(
    -\int_0^{\cdot\wedge\tau_n}
    \frac{\sigma(\Pi_s)}{1-\Pi_s}\,dB_s
  \right).
\]
These stopped density processes are bounded martingales. Define
\[
  B^1_{t\wedge\tau_n}=B_{t\wedge\tau_n}-\int_0^{t\wedge\tau_n}\frac{\sigma(\Pi_s)}{\Pi_s}ds \quad \text{and} \quad B^0_{t\wedge\tau_n}=B_{t\wedge\tau_n}+\int_0^{t\wedge\tau_n}\frac{\sigma(\Pi_s)}{1-\Pi_s}ds.
\]
Girsanov's theorem therefore shows that $B^1$ and $B^0$ are $\mathbb{Q}^1$-- and $\mathbb{Q}^0$--Brownian motions stopped at $\tau_n$, respectively. Substituting $\Pi_t=G(Y_t)$ into \eqref{eq:Y-under-P} gives $dY_t=\nu_1(Y_t)dt+dB^1_t$ under $\mathbb{Q}^1$, and $dY_t=\nu_0(Y_t)dt+dB^0_t$ under $\mathbb{Q}^0$,
up to $\tau_n$ where, for $y\in I$,
\begin{equation}\label{eq:embedding-drifts}
  \nu_1(y)=-\frac12\sigma'(G(y))+\frac{\sigma(G(y))}{G(y)},
  \qquad
  \nu_0(y)=-\frac12\sigma'(G(y))-\frac{\sigma(G(y))}{1-G(y)}.
\end{equation}
Since $\tau_n\uparrow\tau$, the stopped Brownian motions agree on
overlapping stochastic intervals. Hence the
preceding dynamics hold locally on $[0,\tau)$ under
$\mathbb Q^1$ and $\mathbb Q^0$, respectively. In particular,
$\nu_0,\nu_1\in C(I;\mathbb R)$, so the two conditional laws have the
form required of the signal model in
Section \ref{sec:bin-seq-inf}.

This analysis suggests taking the transformed drifts $\nu_0$ and $\nu_1$ as the two state-dependent signal drifts. The next theorem shows that this choice exactly recovers the prescribed autonomous win-martingale as the posterior process.

\begin{theorem}[Posterior embedding]
\label{thm:posterior-embedding}
Let $\Pi$ be the autonomous win-martingale in
\eqref{eq:intro-target}, and let $F$, $G$, $I$, $\nu_0$, and $\nu_1$
be defined by \eqref{eq:F-def} and \eqref{eq:embedding-drifts}.
In the binary sequential experiment of
Section \ref{sec:bin-seq-inf}, take $p=\pi$, $J=I$, $x_0=F(\pi)$, and $\mu_i\equiv\nu_i$, $i\in\{0,1\}$.
Then, $P=G(X)$ up to indistinguishability,
$\zeta=\inf\{t\geq0:P_t\in\{0,1\}\}$,
\begin{equation}\label{eq:thm-P-embedding}
  P_t=p+\int_0^{t\wedge \zeta}\sigma(P_s)\,d\overline W_s, \quad t\geq0, \quad \text{and} \quad \operatorname{Law}_{\widetilde{\mathbb P}}(P)=\operatorname{Law}_{\mathbb P}(\Pi).
\end{equation}
\end{theorem}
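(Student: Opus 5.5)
The strategy is to compute the likelihood ratio $L$ of Proposition \ref{prop:likelihood-posterior} explicitly in the experiment with $\mu_i=\nu_i$. The aim is to show that it collapses to a function of the current signal, namely
\[
\frac{p}{1-p}L_t=\frac{G(X_t)}{1-G(X_t)}, \qquad t<\zeta.
\]
By \eqref{eq:posterior-odds} this gives $P_t=G(X_t)$ on $\llbracket0,\zeta\llbracket$.

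For the key identity, set $\phi(y):=\log\frac{G(y)}{1-G(y)}$, so that $\phi(x_0)=\log\frac{p}{1-p}$. Since $G'=\sigma\circ G$, we get $\phi'(y)=\frac{\sigma(G)}{G(1-G)}$. From \eqref{eq:embedding-drifts},
\[
\delta=\nu_1-\nu_0=\sigma(G)\Bigl(\tfrac1G+\tfrac1{1-G}\Bigr)=\phi'.
\]
Itô's formula gives $d\phi(X_t)=\delta(X_t)\,dX_t+\tfrac12\phi''(X_t)\,dt$. Comparing with \eqref{eq:likelihood-process}, it remains to check the pointwise identity $\phi''=-(\nu_1^2-\nu_0^2)=-\delta(\nu_1+\nu_0)$. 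This is a direct computation. Write $g=G(y)$, $s=\sigma(g)$, and note $\frac{d}{dy}=s\frac{d}{dg}$. Then
\[
\phi''=s\,\frac{d}{dg}\frac{s}{g(1-g)}=\frac{s s'}{g(1-g)}-\frac{s^2(1-2g)}{g^2(1-g)^2}.
\]
On the other side, $\nu_1+\nu_0=-s'+s\bigl(\tfrac1g-\tfrac1{1-g}\bigr)$ and $\delta=\frac{s}{g(1-g)}$, so the two expressions match. Hence $\log L_t=\phi(X_t)-\phi(x_0)$ for $t<\zeta$, and the identity $P=G(X)$ follows on $\llbracket0,\zeta\llbracket$.

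Next, substitute into \eqref{eq:filtering-equation}: $P(1-P)\delta(X)=g(1-g)\frac{s}{g(1-g)}=\sigma(P)$, which yields \eqref{eq:thm-P-embedding} before $\zeta$. For the stopping-time claim, $X_t\to\{\ell,r\}$ as $t\uparrow\zeta$ exactly when $G(X_t)\to\{0,1\}$, so $\zeta=\inf\{t:G(X_t)\in\{0,1\}\}$ when the extended $G$ is used after $\zeta$. It remains to show $P_t=G(X_\zeta)$ for $t\ge\zeta$ on $\{\zeta<\infty\}$. Because $P$ is a bounded martingale with continuous paths before $\zeta$, its limit $P_{\zeta-}\in\{0,1\}$ exists. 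A $[0,1]$-valued martingale at $0$ or $1$ stays there, since $P_\zeta=\widetilde{\mathbb E}[\theta\mid\mathcal F^X_\zeta]\in\{0,1\}$ forces $\theta$ to be $\mathcal F^X_\zeta$-measurable. Thus $P=G(X)$ holds on all of $[0,\infty)$, and in particular $P$ is continuous.

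For the law identity, $P$ solves \eqref{eq:intro-target} with $\pi=p$, driven by the $\mathbb F^X$-Brownian motion $\overline W$ and stopped at its absorption time. We extend $\overline W$ beyond $\zeta$ by an independent Brownian motion if needed; this does not affect the stopped integral. Lemma \ref{lem:pi-well-defined} gives pathwise uniqueness, and Yamada--Watanabe then gives uniqueness in law. Hence $\operatorname{Law}_{\widetilde{\mathbb P}}(P)=\operatorname{Law}_{\mathbb P}(\Pi)$.

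The main obstacle is the behaviour at and after $\zeta$. This covers the identification of $\zeta$ with the absorption time of $P$, the possibility $\zeta=\infty$, and the extension of $\overline W$ past $\zeta$ so that uniqueness in law applies to the whole absorbed path rather than only up to exits of compacts. The algebraic identity $\phi''=-(\nu_1^2-\nu_0^2)$ is the heart of the argument but is routine.
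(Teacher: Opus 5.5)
Your proof is correct up to one step at $\zeta$, and it takes a genuinely different route from the paper's.

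\textbf{The paper's route.} The paper never computes $L$. It matches $\operatorname{Law}_{\widetilde{\mathbb Q}^i}(X)$ with $\operatorname{Law}_{\mathbb Q^i}(Y)$ by uniqueness in law, and mixing gives $\operatorname{Law}_{\widetilde{\mathbb P}}(X)=\operatorname{Law}_{\mathbb P}(Y)$. It then transports the density $d\mathbb Q^1/d\mathbb P|_{\mathcal F_t}=\Pi_t/\pi$ to get $\widetilde{\mathbb E}[\theta\Phi(X)]=\widetilde{\mathbb E}[G(X_t)\Phi(X)]$ for bounded path functionals $\Phi$. Since $\Pi_t=G(Y_t)$ for all $t\geq0$, this gives $P_t=G(X_t)$ for every $t$ at once, including after $\zeta$. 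The law identity is then immediate, and the filtering equation is used only to read off the volatility.

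\textbf{Your route.} You show by It\^o's formula that the Girsanov log-likelihood collapses to $\logit G(X_t)-\logit p$. Your identity $\phi''=-(\nu_1^2-\nu_0^2)$ is exactly \eqref{eq:riccati} with $\kappa=0$, i.e.\ $G/(1-G)$ is $\mathcal L_0$--harmonic. So you verify directly that the embedding lands in the $\kappa=0$ branch of Theorem \ref{thm:recognition} and Corollary \ref{cor:h-transform}. This bypasses $\mathbb Q^i$, the Girsanov analysis of Section \ref{sec:embedding}, and the matching of the two conditional experiments. The cost is that $L$ only lives on $\llbracket0,\zeta\llbracket$. You therefore need a separate boundary argument, and Yamada--Watanabe with $\overline W$ extended past $\zeta$ for the law identity.

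\textbf{The step that needs filling.} You know $P_{\zeta-}=G(X_\zeta)\in\{0,1\}$ on $\{\zeta<\infty\}$. You then assert $P_\zeta=\widetilde{\mathbb E}[\theta\mid\mathcal F^X_\zeta]\in\{0,1\}$, but a c\`adl\`ag martingale could a priori jump at $\zeta$. Close this with the martingale property at the predictable time $\zeta$, announced by $\tau_{K_n}\uparrow\zeta$. Optional sampling and L\'evy's upward theorem give
\[
P_{\zeta-}=\widetilde{\mathbb E}\Bigl[P_\zeta\,\Big|\,\bigvee_n\mathcal F^X_{\tau_{K_n}}\Bigr].
\]
Since $P_{\zeta-}\in\{0,1\}$ and $P_\zeta\in[0,1]$, this forces $P_\zeta=P_{\zeta-}$. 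The paper uses this same argument in the proof of Corollary \ref{cor:classification}(i). With it in place, your ``stays there'' step gives $P_t=G(X_\zeta)$ for $t\geq\zeta$, and the remainder of your argument goes through.
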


\begin{proof}
By Lemma \ref{lem:signal-well-posed}, the prescribed experiment is
well defined. Recall that $\widetilde{\mathbb Q}^i:=\widetilde{\mathbb P}(\,\cdot\mid\theta=i)$ for $i\in\{0,1\}$. Under $\widetilde{\mathbb Q}^i$, the signal $X$ solves the absorbed equation $dX_t=\nu_i(X_t)\,dt+dW_t$ for $t<\zeta$.
At the same time, the preceding Girsanov calculation shows that $Y=F(\Pi)$ satisfies
the same equation under $\mathbb Q^i$. Uniqueness in law therefore gives $\operatorname{Law}_{\widetilde{\mathbb Q}^i}(X)=\operatorname{Law}_{\mathbb Q^i}(Y)$, $i\in\{0,1\}$. Since $p=\pi$ and $\widetilde{\mathbb P}
  =
  (1-\pi)\widetilde{\mathbb Q}^0
  +
  \pi\widetilde{\mathbb Q}^1$, $\mathbb P
  =
  (1-\pi)\mathbb Q^0+\pi\mathbb Q^1$,
it follows that $\operatorname{Law}_{\widetilde{\mathbb P}}(X)=\operatorname{Law}_{\mathbb P}(Y)$.

Fix $t\geq0$ and let $\Phi$ be any bounded Borel functional of the signal path up to time $t$. Since $\Pi=G(Y)$, we have by the density
identities \eqref{eq:embedding-densities}
\[
  \widetilde{\mathbb E}\bigl[\theta\Phi(X)\bigr]
  =
  \pi\,
  \mathbb E_{\widetilde{\mathbb Q}^1}\bigl[\Phi(X)\bigr] =
  \pi\,
  \mathbb E_{\mathbb Q^1}\bigl[\Phi(Y)\bigr] =
  \mathbb E\bigl[\Pi_t\Phi(Y)\bigr] =
  \widetilde{\mathbb E}
  \bigl[G(X_t)\Phi(X)\bigr].
\]
It follows that $P_t= \widetilde{\mathbb E}[\theta\mid\mathcal F_t^X]=G(X_t)$ $\widetilde{\mathbb P}$--a.s.
Equality at rational times, together with the c\`adl\`ag property of
$P$ and continuity of $G(X)$, gives indistinguishability. Since $G(I)=(0,1)$ and $G$ maps the two endpoints of $\overline I$
to $0$ and $1$, respectively, $\zeta
  =
  \inf\{t\geq0:P_t\in\{0,1\}\}$,
and $P=G(X)$ is absorbed after $\zeta$. Finally, the definitions of $\nu_0$ and $\nu_1$ give $\delta(x):=
  \nu_1(x)-\nu_0(x) 
  =
  \sigma(G(x))/(G(x)(1-G(x)))$.
Using $P_t=G(X_t)$ in the filtering equation
\eqref{eq:filtering-equation}, we obtain $dP_t
  =
  P_t(1-P_t)\delta(X_t)\,d\overline W_t=
  \sigma(P_t)\,d\overline W_t$, $t<\zeta$,
and \eqref{eq:thm-P-embedding} follows by absorption. The equality of laws is immediate.
\end{proof}

\section{Win-martingales from binary experiments}\label{sec:converse}
We now turn to the converse, in which a posterior is induced by the observation process of a binary sequential experiment. We adopt the notation of Section \ref{sec:bin-seq-inf} and assume that $\delta=\mu_1-\mu_0\in C^1(J;\mathbb R)$. This assumption is consistent with the preceding section. Although the individual drifts produced by the embedding need only be continuous, their difference is $C^1$. Throughout this section, stochastic identities are understood to hold up to indistinguishability on $\llbracket 0,\zeta\llbracket$ unless otherwise stated, and arguments are localized at $\tau_K$, for compact interval $K\subset J$ with $x_0$ in the interior of $K$, whenever necessary.

By \eqref{eq:likelihood-process} and \eqref{eq:posterior-odds}, the posterior $P_t$ is determined by $t$ and the observed path $(X_s)_{s\leq t}$. This is weaker than the relation considered in the preceding section, where the observation process $Y$ was obtained from $\Pi$ through an \emph{invertible} pointwise transformation: $Y_t=F(\Pi_t)$, $\Pi_t = G(Y_t)$. To obtain a genuine converse to that construction, we first seek conditions under which the posterior can be recovered solely from the current observation, rather than its entire history. This leads to the following definition, which ensures that the pair $(t,X_t)$ is a sufficient statistic for the hidden state $\theta$.

\begin{definition}[Current-state posteriors]\label{def:current-state}
The posterior is a \emph{current-state (C-S) posterior} if $P_t=\Gamma(t,X_t)$ on $\llbracket 0,\zeta\llbracket$ for some
\(
  \Gamma\in C([0,\infty)\times J;(0,1))\cap C^{1,2}((0,\infty)\times J;(0,1)).
\)
\end{definition}

We now characterize these posteriors fully. We will need the following support property to pass from stochastic identities evaluated along $X$ to pointwise identities.

\begin{lemma}
\label{lem:stopped-full-support}
    For all $s>0$ and every nonempty open $U\subset\operatorname{int}(K)$, $\widetilde{\mathbb P}(X_s\in U,\ s<\tau_K)>0$.
\end{lemma}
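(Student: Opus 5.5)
The plan is to reduce to a statement about Brownian motion under the null measure. We work under $\widetilde{\mathbb Q}^0$ first, where the signal solves $dX_t=\mu_0(X_t)\,dt+dW_t$ before $\zeta$. Because $\widetilde{\mathbb P}=(1-p)\widetilde{\mathbb Q}^0+p\widetilde{\mathbb Q}^1$ with $p<1$, it suffices to show $\widetilde{\mathbb Q}^0(X_s\in U,\ s<\tau_K)>0$.

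Write $K=[a,b]$. Since $\mu_0$ is continuous on the compact set $K$, it is bounded there, say $|\mu_0|\le M$ on $K$. We define a measure $\mathbb R$ on $\widetilde{\mathcal F}_{s\wedge\tau_K}$ by
\[
  \frac{d\mathbb R}{d\widetilde{\mathbb Q}^0}
  =
  \mathcal E\left(-\int_0^{\cdot\wedge\tau_K}\mu_0(X_u)\,dW_u\right)_s .
\]
The integrand is bounded by $M$, so Novikov's condition holds and this density is a strictly positive true martingale. Girsanov's theorem then shows that, under $\mathbb R$, the stopped process $X_{\cdot\wedge\tau_K}$ is a Brownian motion started at $x_0$ and stopped on exiting $(a,b)$. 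The two measures are equivalent on $\widetilde{\mathcal F}_{s\wedge\tau_K}$, and the event $\{X_s\in U,\ s<\tau_K\}$ lies in this $\sigma$-field. Hence it is enough to prove
\[
  \mathbb R(X_s\in U,\ s<\tau_K)>0 .
\]

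This last claim is the elementary core. By the previous step, the probability in question equals
\[
  \int_U q_s(x_0,y)\,dy ,
\]
where $q_s$ is the transition density of Brownian motion killed on leaving $(a,b)$. We can write $q_s$ explicitly by the method of images, or through its eigenfunction expansion $\sum_n e^{-n^2\pi^2 s/(2(b-a)^2)}\phi_n(x)\phi_n(y)$, and it is strictly positive on $(a,b)^2$ for every $s>0$. Alternatively, we can avoid formulas. Pick $y\in U$ and a small $\varepsilon$ with $[y-\varepsilon,y+\varepsilon]\subset U$. Let $f$ be the linear path from $x_0$ to $y$ on $[0,s]$; it stays in $(a,b)$ because $(a,b)$ is convex. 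Choose $\eta<\varepsilon$ smaller than the distance from the range of $f$ to $\{a,b\}$. The support theorem for Brownian motion, i.e.\ positivity of the Wiener measure of uniform tubes, gives $\mathbb R\bigl(\sup_{u\le s}|X_u-f(u)|<\eta\bigr)>0$. On that event we have $s<\tau_K$ and $X_s\in U$.

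The main obstacle is the localisation rather than the positivity. We need the Girsanov density to be a true martingale up to $s\wedge\tau_K$, and we need the two events to be compared consistently on the stopped $\sigma$-field. Stopping at $\tau_K$ handles both, since the drift is only assumed continuous on $J$ and may be unbounded near $\ell$ or $r$. I would use the tube argument to avoid any regularity issues with the killed heat kernel.
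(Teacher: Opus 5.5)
Your proof is correct. It has the same skeleton as the paper's (Girsanov removes a drift that is bounded before $\tau_K$, then positivity for Brownian motion), but you apply Girsanov in a different representation.

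The paper works directly under $\widetilde{\mathbb P}$ in the observation filtration $\mathbb F^X$. It uses the innovation form $dX_t=\widehat\mu(X_t,P_t)\,dt+d\overline W_t$ from Proposition~\ref{prop:likelihood-posterior}, whose drift is bounded on $\llbracket0,\tau_K\llbracket$ because $P_t\in[0,1]$ and $\mu_0,\delta$ are bounded on $K$. A single change of measure then gives a law equivalent to $\widetilde{\mathbb P}$ on $\mathcal F^X_s$ under which $X_{\cdot\wedge\tau_K}$ is a stopped Brownian motion.

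You instead pass to $\widetilde{\mathbb Q}^0$ through the bound $\widetilde{\mathbb P}\geq(1-p)\widetilde{\mathbb Q}^0$. You then remove the Markovian drift $\mu_0(X_t)$ against the original $W$ in $\widetilde{\mathbb F}$, which is still a Brownian motion under $\widetilde{\mathbb Q}^0$ because $\{\theta=0\}\in\widetilde{\mathcal F}_0$.

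Your route is more elementary: it needs only the signal SDE, not the filtering theorem or the innovation process. The paper's route is a one-liner once Proposition~\ref{prop:likelihood-posterior} is available, and it stays under the marginal law. Your mixture step can also be dropped. Under $\widetilde{\mathbb P}$ in $\widetilde{\mathbb F}$, the drift $\mu_\theta(X_t)$ is already bounded by $\max_i\sup_K|\mu_i|$ before $\tau_K$, so Girsanov applies directly.

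Your tube argument spells out the positivity of Brownian motion killed on leaving $(a,b)$, which the paper only attributes to ``the full-support property''. Two small cleanups. Define $\mathbb R$ on $\widetilde{\mathcal F}_s$ rather than $\widetilde{\mathcal F}_{s\wedge\tau_K}$; the density is unchanged. And state the tube event for the $\mathbb R$-Brownian motion $x_0+W+\int_0^{\cdot\wedge\tau_K}\mu_0(X_u)\,du$, which coincides with $X$ on that event.
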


\begin{proof}
    Under $\widetilde{\mathbb P}$, $dX_t=\widehat{\mu}(X_t,P_t)\,dt+d\overline W_t$, $t<\zeta$.
    The stopped drift $\beta_t:=\mathbf 1_{\{t<\tau_K\}}\widehat{\mu}(X_t,P_t)$ is bounded, since $P_t\in[0,1]$ and $\mu_0$ and $\delta$ are bounded on $K$. Hence, for every fixed $s>0$, Girsanov's theorem, with Novikov's condition on $[0,s\wedge\tau_K]$, yields a probability measure $\widehat{\mathbb P}$ equivalent to $\widetilde{\mathbb P}$ on $\mathcal F^X_s$ such that $X_{\cdot\wedge\tau_K}$ is a stopped $(\widehat{\mathbb{P}},\mathbb F^X)$--Brownian motion started at $x_0$. The full-support property of Brownian motion gives $ \widehat{\mathbb P}(X_s\in U,\ s<\tau_K)>0$.
    Since $\widehat{\mathbb P}$ and $\widetilde{\mathbb P}$ are equivalent on $\mathcal F^X_s$, the claim follows.
\end{proof}

\begin{theorem}[C-S posterior identification]\label{thm:recognition}
$P$ is a C-S posterior if and only if $\delta$ satisfies 
\begin{equation}\label{eq:riccati}
  \delta'(x)+2\mu_0(x)\delta(x)+\delta(x)^2=-\kappa,
  \qquad x\in J,
\end{equation}
for some $\kappa\in\mathbb{R}$.
In this case, the local $\widetilde{\mathbb{Q}}^1/\widetilde{\mathbb{Q}}^0$--likelihood-ratio process before $\zeta$ is
\begin{equation}\label{eq:path-independent-L}
  L_t=\exp\left\{
    A(X_{t})+\kappa t/2
  \right\},
  \qquad t<\zeta, \qquad 
  A(x):=\int_{x_0}^x\delta(u)\,du, \quad \text{and}
\end{equation}
\begin{equation}\label{eq:current-state-map}
 P_t=\Gamma(t,X_t), \ \ \text{where} \ \ \Gamma(t,x)=H\left(\logit p+A(x)+\kappa t/2\right),
  \ \
  H(r):=1/(1+e^{-r}).
\end{equation}
\end{theorem}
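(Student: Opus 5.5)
The plan is to reduce everything to a comparison between the stochastic logarithm of $L$ and the process $A(X_t)$. Itô's formula for the pathwise quantity $A(X_t)$ then shows that the only obstruction to path-independence is the function
\[
g(x):=\delta'(x)+2\mu_0(x)\delta(x)+\delta(x)^2 .
\]
Both directions are localized at $\tau_K$ for a compact $K=[a,b]\subset J$ with $x_0\in(a,b)$, and then patched along an exhaustion of $J$, as in Proposition \ref{prop:likelihood-posterior}.

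\emph{Step 1 (key identity).} Since $\delta\in C^1(J)$, we have $A\in C^2(J)$, and Itô's formula gives $dA(X_t)=\delta(X_t)\,dX_t+\tfrac12\delta'(X_t)\,dt$ for $t<\zeta$. Hence $\int_0^t\delta(X_s)\,dX_s=A(X_t)-\tfrac12\int_0^t\delta'(X_s)\,ds$. We also have $\mu_1^2-\mu_0^2=\delta(2\mu_0+\delta)$. Substituting both into \eqref{eq:likelihood-process} yields, indistinguishably on $\llbracket0,\zeta\llbracket$,
\[
\log L_t=A(X_t)-\tfrac12\int_0^t g(X_s)\,ds .
\]

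\emph{Step 2 (sufficiency).} If \eqref{eq:riccati} holds, then $g\equiv-\kappa$ and Step 1 gives \eqref{eq:path-independent-L} at once. By Proposition \ref{prop:likelihood-posterior}, $\operatorname{logit}P_t=\operatorname{logit}p+\log L_t$. This gives \eqref{eq:current-state-map}, with $\Gamma$ smooth and $(0,1)$-valued, so $P$ is a C-S posterior.

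\emph{Step 3 (necessity).} Suppose $P_t=\Gamma(t,X_t)$. Set $\psi(t,x):=\operatorname{logit}\Gamma(t,x)-\operatorname{logit}p-A(x)$, which lies in $C^{1,2}((0,\infty)\times J)$ and is continuous up to $t=0$. By Proposition \ref{prop:likelihood-posterior} and Step 1,
\[
\psi(t,X_t)=-\tfrac12\int_0^t g(X_s)\,ds ,
\]
a continuous finite-variation process. On the other hand, Itô's formula on $[\varepsilon,t\wedge\tau_K]$ exhibits $\psi(t,X_t)$ as a semimartingale with local martingale part $\int\psi_x(s,X_s)\,d\overline W_s$. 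Uniqueness of the canonical decomposition forces $\int\psi_x(s,X_s)^2\,ds=0$, so $\psi_x(s,X_s)=0$ for a.e.\ $s<\tau_K$, a.s. By continuity of $\psi_x$ this holds for all such $s$.

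Lemma \ref{lem:stopped-full-support} now upgrades this to a pointwise statement. If $\psi_x(s,x)\neq0$ at some $s>0$ and $x\in\operatorname{int}K$, then $\psi_x\neq0$ on a neighbourhood $(s-\eta,s+\eta)\times U$. The lemma shows that $X_s\in U$ with positive probability before $\tau_K$, and by path continuity $X$ stays in $U$ on a time interval of positive length. This contradicts the a.e.\ vanishing. Hence $\psi_x\equiv0$, and exhausting $J$ gives $\psi=\psi(t)$, a $C^1$ function of time alone.

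Then $\psi_{xx}=0$, so $d\psi(t,X_t)=\psi'(t)\,dt$, and comparing with the display gives $g(X_t)=-2\psi'(t)$ for all $t<\zeta$, a.s. Fix $t>0$ and points $x,y\in\operatorname{int}K$. Applying Lemma \ref{lem:stopped-full-support} to small neighbourhoods $U_x,U_y$ shows that values of $g$ arbitrarily close to $g(x)$ and to $g(y)$ are both equal to $-2\psi'(t)$. By continuity of $g$ (note that $\delta\in C^1$ and $\mu_0\in C$), $g(x)=g(y)$. Thus $g\equiv-\kappa$ on $J$, which is \eqref{eq:riccati}, and Step 2 then supplies the formulas.

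The main obstacle is the passage from almost-sure identities along the path of $X$ to pointwise identities in $(t,x)$. This is handled by open-set contradiction arguments built on Lemma \ref{lem:stopped-full-support}, together with care at $t=0$, where $\Gamma$ is only continuous. That is why Itô's formula is applied on $[\varepsilon,t]$ and $\varepsilon\downarrow0$ is taken afterwards.
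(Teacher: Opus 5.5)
Your argument is correct. It shares the paper's skeleton: match semimartingale decompositions, upgrade to pointwise identities via Lemma~\ref{lem:stopped-full-support} and continuity, then separate variables. The bookkeeping, however, is genuinely different.

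The paper applies It\^o's formula to $\Gamma(t,X_t)$ and compares it with the filtering equation \eqref{eq:filtering-equation}. This yields two identities: the diffusion match $\Gamma_x=\Gamma(1-\Gamma)\delta$ and the drift equation $\Gamma_t+\Gamma_x(\mu_0+\Gamma\delta)+\tfrac12\Gamma_{xx}=0$. It integrates the first to $\logit\Gamma=A(x)+c(t)$, computes $\Gamma_t$ and $\Gamma_{xx}$, and substitutes into the second to separate variables. The It\^o computation for $A(X_t)$ is used there only afterwards, to obtain \eqref{eq:path-independent-L}.

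You instead front-load that computation into $\log L_t=A(X_t)-\tfrac12\int_0^t g(X_s)\,ds$, so $\psi(t,X_t)$ is known in advance to be of finite variation. Only the martingale parts then need matching; your $\psi_x\equiv0$ is exactly the paper's first identity. The drift equation is replaced by the observation that $g(X_t)=-2\psi'(t)$ is deterministic, which full support turns into constancy of $g$.

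What your route buys:
\begin{itemize}
\item It avoids the second-derivative computation for $\Gamma$.
\item Sufficiency becomes an immediate consequence of the same identity.
\item It exposes the link with Corollary~\ref{cor:h-transform}. With $h=e^{A}$ one has $g=2\mathcal L_0h/h$, so your Step~1 reads $L_t=\frac{h(X_t)}{h(x_0)}\exp\{-\int_0^t(\mathcal L_0h/h)(X_s)\,ds\}$. The Riccati equation is then precisely the condition that this exponential weight be deterministic.
\item You handle $t=0$, where $\Gamma$ is only continuous, explicitly by applying It\^o's formula on $[\varepsilon,t]$, which is slightly more careful than the paper.
\end{itemize}

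The paper's route, in exchange, produces the backward equation satisfied by $\Gamma$ under the innovation dynamics, which is the natural object from the filtering viewpoint.
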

\begin{proof}
Suppose first that $P$ is a C-S posterior, so that
$P_t=\Gamma(t,X_t)$, and put $R=\logit \Gamma$. Comparing It\^o's formula for
$\Gamma(t,X_t)$ with \eqref{eq:filtering-equation} gives
\begin{equation}\label{eq:G-Ito-converse}
\text{(i)} \quad \Gamma_x=\Gamma(1-\Gamma)\delta,\quad \text{and} \quad
\text{(ii)} \quad \Gamma_t+\Gamma_x(\mu_0+\Gamma \delta)
+\tfrac12\Gamma_{xx}=0,
\end{equation}
evaluated at $(t,x)=(t,X_t)$ for $(dt\otimes \widetilde{\mathbb P})$--almost every $(t,\omega)$ on $\llbracket 0,\tau_K\llbracket$. Take any $T>0$. An application of Tonelli's theorem yields that this holds for Lebesgue-almost all $t\in(0,T)$, $\widetilde{\mathbb{P}}$--a.s.~on $\{t<\tau_K\}$. $X$ has full support on the interior of $K$ by Lemma \ref{lem:stopped-full-support}, so continuity implies that this can be upgraded to pointwise on $(0,T)\times K$. Exhausting $J$ by compact intervals as in the proof of Proposition \ref{prop:likelihood-posterior} and gives
\eqref{eq:G-Ito-converse} pointwise on $(0,\infty)\times J$,
since $T>0$ was arbitrary. 
Dividing \eqref{eq:G-Ito-converse}(i)
by $\Gamma(1-\Gamma)$ yields $R_x(t,x)=\delta(x)$,
and hence there exists a $c\in C([0,\infty);\mathbb R)\cap C^{1}((0,\infty);\mathbb R)$ such that $R(t,x)=A(x)+c(t)$, where $A$ is given by
\eqref{eq:path-independent-L}. Moreover, from direct computation, we get $\Gamma_t=\Gamma(1-\Gamma)R_t$ and $\Gamma_{xx}
=\Gamma(1-\Gamma)\{\delta'+(1-2\Gamma)\delta^2\}$.
Substituting into \eqref{eq:G-Ito-converse}(ii)
and dividing by $\Gamma(1-\Gamma)$
gives $c'+\mu_0\delta
+\frac12\delta'+\frac12\delta^2=0$.
The first term depends only on $t$, while the remaining terms depend
only on $x$. Therefore, they are constant and there exists a $\kappa\in\mathbb{R}$ such that $c'(t)=\kappa/2$, which yields \eqref{eq:riccati}. Before showing the converse we prove the final statement about $L$ and $\Gamma$.
Since $\Gamma(0,x_0)=p$ and $A(x_0)=0$, we have
\(c(0)=\logit p\), which proves \eqref{eq:current-state-map} through the definition of $R$. By It\^o's formula on $A(X_{t\wedge\tau_K})$, \eqref{eq:likelihood-process}, \eqref{eq:riccati}, and $\mu_1^2-\mu_0^2=2\mu_0\delta+\delta^2$ we get \eqref{eq:path-independent-L}. 

Suppose now that \eqref{eq:riccati} holds. We can obtain \eqref{eq:path-independent-L} by repeating the argument above. Bayes' formula \eqref{eq:posterior-odds} then yields
\(
P_{t}
=
H\left(
\logit p+A(X_{t})
+\kappa t/2
\right),
\)
which proves that $P_t=\Gamma(t,X_t)$, and hence $P$ is a C-S posterior.
\end{proof}

The characterization in Theorem \ref{thm:recognition} can be recast as a Doob
$h$--transform condition on the \emph{null generator}
$\mathcal L_0f:=\mu_0f'+\tfrac12f''$ of $X$ under $\widetilde{\mathbb Q}^0$;
cf.\ Section \ref{sec:doob} and Remark \ref{rmk:doob_h_embedding}.

\begin{corollary}\label{cor:h-transform}
$P$ is a C-S posterior if and only if there exist $\kappa\in\mathbb R$ and
$h\in C^2(J;(0,\infty))$ with $\delta=h'/h$ such that
$\widehat h(t,x):=e^{\kappa t/2}h(x)$ is space-time harmonic for $\mathcal L_0$.
In this case, the local likelihood-ratio process is $L_t=\widehat h(t,X_t)/\widehat h(0,x_0)$ for $t<\zeta$.
\end{corollary}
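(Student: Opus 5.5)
The plan is to reduce the corollary to Theorem \ref{thm:recognition} through the substitution $h=e^{A}$, that is, $h(x)=\exp\{\int_{x_0}^x\delta(u)\,du\}$. The key observation is that the Riccati equation \eqref{eq:riccati} is exactly the linearization of the eigenvalue problem $\mathcal L_0h=-\tfrac{\kappa}{2}h$ under this logarithmic change of variables. Space-time harmonicity of $\widehat h(t,x)=e^{\kappa t/2}h(x)$ is in turn equivalent to $(\partial_t+\mathcal L_0)\widehat h=e^{\kappa t/2}\bigl(\tfrac{\kappa}{2}h+\mathcal L_0h\bigr)=0$, i.e.\ to $\mathcal L_0 h=-\tfrac{\kappa}{2}h$. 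So the whole statement comes down to showing that, for positive $C^2$ functions $h$ with $h'/h=\delta$, the ODE $\mu_0h'+\tfrac12h''=-\tfrac\kappa2 h$ is equivalent to \eqref{eq:riccati}.

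For this computation, let $h\in C^2(J;(0,\infty))$ with $h'/h=\delta$. Then $h'=\delta h$ and $h''=(\delta'+\delta^2)h$; here $\delta\in C^1$, so $h$ is indeed $C^2$. Dividing by $h>0$, we get
\[
\frac{\mathcal L_0h}{h}=\mu_0\delta+\tfrac12\bigl(\delta'+\delta^2\bigr).
\]
Hence $\mathcal L_0h=-\tfrac\kappa2h$ if and only if $\delta'+2\mu_0\delta+\delta^2=-\kappa$.

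Both directions of the corollary now follow.

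\emph{Forward direction.} Suppose $P$ is a C-S posterior. Theorem \ref{thm:recognition} supplies $\kappa$ satisfying \eqref{eq:riccati}. Take $h=e^{A}$, which is positive and $C^2$ with $h'/h=\delta$. By the computation, $\widehat h$ is space-time harmonic for $\mathcal L_0$.

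\emph{Reverse direction.} Given such $\kappa$ and $h$, the computation yields \eqref{eq:riccati}. Theorem \ref{thm:recognition} then shows that $P$ is C-S.

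It remains to identify the likelihood ratio. Any $h$ with $h'/h=\delta$ satisfies $h(x)=h(x_0)e^{A(x)}$, so $h$ is determined up to a positive constant, and that constant cancels in the ratio. This gives
\[
\frac{\widehat h(t,X_t)}{\widehat h(0,x_0)}=\exp\{A(X_t)+\kappa t/2\},
\]
which is \eqref{eq:path-independent-L}. This also confirms that $L$ is the local Doob $\widehat h$--transform density from Section \ref{sec:doob}.

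No step here is a serious obstacle. The points that need care are: checking $C^2$ regularity of $h$, which follows from $\delta\in C^1$; the normalization-independence just noted; and matching the sign convention, namely that harmonicity of $e^{\kappa t/2}h$ corresponds to eigenvalue $\lambda=-\kappa/2$ in the notation of Section \ref{sec:doob}.
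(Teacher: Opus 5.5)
Your proposal is correct and follows the paper's proof essentially verbatim. Like the paper, you reduce to Theorem \ref{thm:recognition} and note that $\delta=h'/h$ forces $h=Ce^{A}$. You then use $h''/h=\delta'+\delta^2$ to get $2\mathcal L_0h/h=\delta'+2\mu_0\delta+\delta^2$, which makes \eqref{eq:riccati} equivalent to $\mathcal L_0h=-\kappa h/2$, i.e.\ to space-time harmonicity of $\widehat h$, and finally read off $L$ from \eqref{eq:path-independent-L} with the constant $C$ cancelling. One cosmetic slip in your wording: under $h=e^{A}$ it is the eigenvalue problem $\mathcal L_0h=-\tfrac{\kappa}{2}h$ that linearizes the Riccati equation, not the other way around.
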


\begin{proof}
By Theorem \ref{thm:recognition}, it suffices to show that \eqref{eq:riccati} holds
for some $\kappa$ if and only if such a pair $(\kappa,h)$ exists. On $J$, $\delta=h'/h$
holds if and only if $h=Ce^{A}$ for some $C>0$. For such $h$, $h'/h=\delta$ and
$h''/h=\delta'+\delta^2$, so $2\mathcal L_0h/h=\delta'+2\mu_0\delta+\delta^2$. Hence
\eqref{eq:riccati} is equivalent to $\mathcal L_0h=-\kappa h/2$; i.e.\ to $\widehat h$
being space-time harmonic. In this case \eqref{eq:path-independent-L} gives
$L_t=\exp\{A(X_t)+\kappa t/2\}=\widehat h(t,X_t)/\widehat h(0,x_0)$.
\end{proof}

Thus, $\widetilde{\mathbb Q}^1$ is (locally) the Doob
$\widehat h$--transform of $\widetilde{\mathbb Q}^0$.
In general, the Girsanov density
\eqref{eq:likelihood-process} is a functional of the entire observed path. Corollary \ref{cor:h-transform} and
\eqref{eq:posterior-odds} show that $P$ is a function of $(t,X_t)$
precisely when this functional collapses to an $h$--transform of this type.

\begin{remark}\label{rmk:scope}
Note that autonomy of $P$ does not imply the C-S property, which is \emph{not invariant}
under filtration-preserving transformations. For
instance, if $\delta\equiv\Delta\neq0$ and $\mu_0(x)=-x$ on $J=\mathbb R$,
\eqref{eq:filtering-equation} gives $dP_t=\Delta P_t(1-P_t)\,d\overline W_t$,
yet \eqref{eq:riccati} fails. Nonetheless, $P$ is a C-S posterior for the signal $S_t:=X_t-x_0+\int_0^tX_s\,ds$ which has drift $\Delta\theta$ and $\mathbb F^S=\mathbb F^X$.
\end{remark}

\subsection{Autonomous current-state posteriors}
Throughout this subsection we assume that $P$ is a C-S posterior, so that, by
Theorem \ref{thm:recognition}, \eqref{eq:riccati} holds and $P_t=\Gamma(t,X_t)$
with $\Gamma$ as in \eqref{eq:current-state-map}. Thus $(t,X_t)$ determines the
posterior, but the converse may fail, since the posterior need not retain all the
information about $X$. The following establishes that $X_t$ can be recovered from $(t,P_t)$ when $\delta$ has no roots.

\begin{lemma}\label{lem:invertible-current-state}
Suppose $\delta\not=0$ on $J$. Then, for each $t\geq0$, the map
$\Gamma(t,\cdot):J\to\Gamma(t,J)$ is strictly monotone and admits an inverse $f(t,q)$ for $q\in\Gamma(t,J)$. In particular, $X_t=f(t,P_t)$ for $t<\zeta$ and \eqref{eq:filtering-equation} becomes
\begin{equation}\label{eq:P_time_inhomogeneous}
  dP_t=P_t(1-P_t)\,\delta\bigl(f(t,P_t)\bigr)\,d\overline W_t,
  \qquad t<\zeta.
\end{equation}
\end{lemma}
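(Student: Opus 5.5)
The plan is to read everything off the explicit formula \eqref{eq:current-state-map} from Theorem \ref{thm:recognition}, namely $\Gamma(t,x)=H(\logit p+A(x)+\kappa t/2)$ with $A(x)=\int_{x_0}^x\delta(u)\,du$.

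\emph{Step 1: strict monotonicity.} Since $\delta\in C^1(J;\mathbb R)$ is continuous and never vanishes on the connected interval $J$, the intermediate value theorem shows that $\delta$ has constant sign on $J$. Hence $A$ is strictly monotone on $J$: increasing if $\delta>0$, decreasing if $\delta<0$. The logistic function $H$ is strictly increasing on $\mathbb R$, and for fixed $t$ the map $x\mapsto \logit p+A(x)+\kappa t/2$ is strictly monotone. Composing, $\Gamma(t,\cdot)$ is strictly monotone on $J$. The same conclusion also follows from \eqref{eq:G-Ito-converse}(i), since $\Gamma_x=\Gamma(1-\Gamma)\delta$ has constant sign.

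\emph{Step 2: the inverse.} Because $\Gamma(t,\cdot)$ is continuous and strictly monotone on the open interval $J$, it is a homeomorphism onto the open interval $\Gamma(t,J)\subseteq(0,1)$. Let $f(t,\cdot)$ denote its inverse. It can be written explicitly as
\[
f(t,q)=A^{-1}\bigl(\logit q-\logit p-\kappa t/2\bigr),\qquad q\in\Gamma(t,J),
\]
and this formula shows that $f$ is jointly continuous, and in fact $C^1$, on its domain $\{(t,q):q\in\Gamma(t,J)\}$. Joint measurability in $(t,q)$ is what is needed for the stochastic integral below to make sense.

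\emph{Step 3: identification along the path.} For $t<\zeta$ we have $X_t\in J$, and Theorem \ref{thm:recognition} gives $P_t=\Gamma(t,X_t)$ indistinguishably on $\llbracket0,\zeta\llbracket$. Applying $f(t,\cdot)$ to both sides yields $X_t=f(t,P_t)$ on the same stochastic interval. Substituting this into the filtering equation \eqref{eq:filtering-equation} of Proposition \ref{prop:likelihood-posterior} replaces $\delta(X_t)$ by $\delta(f(t,P_t))$, which gives \eqref{eq:P_time_inhomogeneous}. The integrand is unchanged as a process, so the stochastic integral is unchanged as well.

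There is no substantial obstacle. The only points needing care are the constant-sign argument in Step 1, which relies on the connectedness of $J$ and the continuity of $\delta$, and the fact that the range $\Gamma(t,J)$ varies with $t$. Because of the latter, $f$ must be regarded as defined on the time-dependent domain from Step 2 rather than on a fixed product set.
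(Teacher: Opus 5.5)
Your proposal is correct and takes essentially the same route as the paper. Both argue that $\delta$ has constant sign on the interval $J$, so $A$ and hence $\Gamma(t,\cdot)=H(\logit p+A(\cdot)+\kappa t/2)$ are strictly monotone; both then invert and substitute $X_t=f(t,P_t)$ into \eqref{eq:filtering-equation}. Your explicit formula $f(t,q)=A^{-1}(\logit q-\logit p-\kappa t/2)$ and your remarks on joint measurability and the time-dependent domain add care that the paper leaves implicit.
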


\begin{proof}
Since $\delta$ is continuous and nonzero on the interval $J$, it has a constant sign. Hence $A$ is strictly monotone, and so is \(x\mapsto \Gamma(t,x)\), since \(H\) is strictly increasing. The inverse \(f(t,\cdot)\) therefore exists. Substituting
\(X_t=f(t,P_t)\) into \eqref{eq:filtering-equation} gives
\eqref{eq:P_time_inhomogeneous}.
\end{proof}

Although the underlying experiment is time-homogeneous, the coefficient in
\eqref{eq:P_time_inhomogeneous} is in general time-dependent, so the posterior
need not be an autonomous win-martingale.
By contrast, in the embedding of Theorem \ref{thm:posterior-embedding} the
posterior is a \emph{time-independent} transformation of the signal, $P=G(X)$.
The following theorem shows that this occurs precisely when $\kappa=0$, in which
case \eqref{eq:P_time_inhomogeneous} is time-homogeneous. On the other hand,
$P$ satisfies \eqref{eq:intro-target} before $\zeta$ for $\kappa\neq0$ precisely when $\delta$ is constant.

\begin{theorem}[Classification of autonomous C-S posteriors]\label{thm:classification}
\emph{(i)} There exists $\gamma\in C(J;(0,1))$ with $P_t=\gamma(X_t)$ for $t<\zeta$
if and only if $\kappa=0$. In this case, either $\delta\equiv0$ and $P\equiv p$, or
$\delta$ has no zeros, $\gamma$ is a $C^2$--diffeomorphism onto $\gamma(J)$ with
inverse $f$, and
\begin{equation}\label{eq:Pi-autonomous-zero-kappa}
  dP_t=P_t(1-P_t)\,\delta(f(P_t))\,d\overline W_t,\qquad t<\zeta.
\end{equation}
\emph{(ii)} Let $\kappa\neq0$. Then $P$ satisfies \eqref{eq:intro-target} for
$t<\zeta$ if and only if $\delta$ is constant. In this case $\delta\equiv\Delta\neq0$ and $dP_t=\Delta P_t(1-P_t)\,d\overline W_t$,  $t<\zeta$.
\end{theorem}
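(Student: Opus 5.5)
For part (i), the starting point is Theorem \ref{thm:recognition}: $P_t=\Gamma(t,X_t)$ with $\Gamma(t,x)=H(\logit p+A(x)+\kappa t/2)$. If $\kappa=0$, then $\gamma(x):=H(\logit p+A(x))$ is continuous with values in $(0,1)$ and time-free, which gives sufficiency.

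For necessity, suppose $P_t=\gamma(X_t)$. Then $\gamma(X_t)=\Gamma(t,X_t)$ on $\llbracket0,\zeta\llbracket$. I would upgrade this to the pointwise identity $\gamma(x)=\Gamma(t,x)$ on $(0,\infty)\times J$ by the same support argument used in the proof of Theorem \ref{thm:recognition}: for fixed $t$ the identity holds a.s. on $\{t<\tau_K\}$, both sides are continuous in $x$, and Lemma \ref{lem:stopped-full-support} lets us pass to all $x\in\operatorname{int}K$; then exhaust $J$. Applying $\logit$, the quantity $\kappa t/2=\logit\gamma(x)-\logit p-A(x)$ is independent of $t$, so $\kappa=0$.

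Still in the case $\kappa=0$, I would use connectedness of $J$ to prove the dichotomy. Suppose $\delta$ has a zero $x^\ast$ but is not identically zero. Evaluating the Riccati equation \eqref{eq:riccati} at $x^\ast$ gives $\delta'(x^\ast)=0$. Now view \eqref{eq:riccati} with $\kappa=0$ as the first-order ODE $\delta'=-2\mu_0\delta-\delta^2$ with continuous coefficient $\mu_0$. Along any solution this is linear in $\delta$ with coefficient $-2\mu_0-\delta$, which is continuous, so Gr\"onwall (or uniqueness for linear ODEs) applied to $\delta$ forces $\delta\equiv0$ from $\delta(x^\ast)=0$. More directly, $\delta(x)=\delta(x^\ast)\exp(-\int_{x^\ast}^x(2\mu_0+\delta))=0$. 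Hence either $\delta\equiv0$, so $A\equiv0$ and $P\equiv p$, or $\delta$ has no zeros.

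In the no-zero case, $\gamma'=\gamma(1-\gamma)\delta\neq0$ and $\gamma\in C^2$ since $\delta\in C^1$. By the inverse function theorem, $\gamma$ is a $C^2$-diffeomorphism onto the open interval $\gamma(J)$ with inverse $f$. Lemma \ref{lem:invertible-current-state}, with $f(t,\cdot)=f$, then gives \eqref{eq:Pi-autonomous-zero-kappa}.

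For part (ii), take $\kappa\neq0$. If $\delta\equiv\Delta$, the case $\Delta=0$ is excluded because \eqref{eq:riccati} would give $\kappa=0$. So $\Delta\neq0$, and \eqref{eq:filtering-equation} becomes $dP_t=\Delta P_t(1-P_t)\,d\overline W_t$. This has the form \eqref{eq:intro-target} with $\sigma(q)=|\Delta|q(1-q)$, after replacing $\overline W$ by $-\overline W$ if $\Delta<0$.

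For the converse, suppose $dP_t=\sigma(P_t)\,dB_t$ before $\zeta$ for some $\sigma\in C^1((0,1);(0,\infty))$ and some Brownian motion $B$. Matching quadratic variations with \eqref{eq:filtering-equation} gives $\sigma(P_t)^2=P_t^2(1-P_t)^2\delta(X_t)^2$, that is,
\[
\sigma(\Gamma(t,X_t))=\Gamma(1-\Gamma)(t,X_t)\,|\delta(X_t)|.
\]
By the support argument this holds for all $(t,x)$. Since $\sigma>0$, $\delta$ has no zeros, and so $\Gamma(t,x)$ is strictly monotone in $x$. Write $q=\Gamma(t,x)$ and $s(q):=\sigma(q)/(q(1-q))$, so that $s(\Gamma(t,x))=|\delta(x)|$ for all $t$. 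Now fix $x$ and vary $t$. Because $\kappa\neq0$, $\Gamma(t,x)$ sweeps the interval $\{H(\logit p+A(x)+\kappa t/2):t>0\}$, which is a half-interval with nonempty interior. Hence $s$ is constant on this range, equal to $|\delta(x)|$.

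The main obstacle is to propagate this constancy across different $x$ to conclude that $\delta$ is constant. The ranges for different $x$ are half-lines in logit-space of the form $(\logit p+A(x),\infty)$ when $\kappa>0$, and these overlap pairwise. On an overlap $s$ equals both $|\delta(x)|$ and $|\delta(x')|$, so $|\delta|$ is constant, and since $\delta$ is continuous and nonvanishing, $\delta$ itself is constant. The case $\kappa<0$ is symmetric.
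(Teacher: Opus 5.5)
Your proposal is correct and follows the paper's proof essentially step by step. In (i) you use the support argument together with injectivity of $H$, and uniqueness for the Riccati ODE at a zero of $\delta$; your integrating-factor identity $\delta(x)=\delta(x^\ast)\exp\bigl(-\int_{x^\ast}^x(2\mu_0+\delta)\bigr)$ simply makes the paper's Picard--Lindel\"of appeal explicit. In (ii) you match quadratic variations and use that the ranges $\Gamma((0,\infty),x)$ overlap across $x$ since all accumulate at $1$ (resp.\ $0$), and using $\sigma>0$ to exclude zeros of $\delta$ is a harmless variant of the paper's passage from constant $\delta^2$ to constant $\delta$.
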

\begin{proof}
(i) If $\kappa=0$, then \eqref{eq:current-state-map} gives $P_t=\gamma(X_t)$ with
$\gamma:=H(\logit p+A(\cdot))$, and \eqref{eq:riccati} reads
$\delta'=-2\mu_0\delta-\delta^2$. Since the right-hand side is locally Lipschitz in $\delta$, Picard--Lindel\"of ensures uniqueness of the solution to this initial-value problem. As it admits the zero solution, either $\delta\equiv0$, whence $P\equiv p$ by
\eqref{eq:filtering-equation}, or $\delta$ has no zeros. In the latter case
$\gamma'=\gamma(1-\gamma)\delta$ has no zeros, so $\gamma$ is a
$C^2$--diffeomorphism onto $\gamma(J)$, and substituting $X_t=f(P_t)$ into
\eqref{eq:filtering-equation} gives \eqref{eq:Pi-autonomous-zero-kappa}.
Conversely, if $P_t=\gamma(X_t)$ with $\gamma\in C(J;(0,1))$, the same argument
as in the proof of Theorem \ref{thm:recognition} yields $\Gamma(t,x)=\gamma(x)$ on
$(0,\infty)\times J$. Since $H$ is injective, \eqref{eq:current-state-map} forces
$\kappa=0$.

(ii) If $\delta\equiv\Delta$, then $\Delta\neq0$, as \eqref{eq:riccati} would
otherwise give $\kappa=0$. Thus, \eqref{eq:filtering-equation}
has the stated dynamics and \eqref{eq:intro-target} holds with $\sigma(q)=|\Delta|q(1-q)$ and $B=\operatorname{sgn}(\Delta)\overline W$. Conversely,
suppose $dP_t=\sigma(P_t)\,dB_t$ for $t<\zeta$ and set
$\Psi(q):=\sigma(q)^2/(q(1-q))^2$. Comparing quadratic variations with
\eqref{eq:filtering-equation} and arguing as in the proof of Theorem
\ref{thm:recognition} gives $\Psi(\Gamma(t,x))=\delta(x)^2$ on $(0,\infty)\times J$.
By \eqref{eq:current-state-map}, $t\mapsto\Gamma(t,x)$ is strictly monotone and
converges, as $t\to\infty$, to $1$ if $\kappa>0$ and to $0$ if $\kappa<0$. Hence,
for $x_1,x_2\in J$, the ranges $I_i:=\Gamma((0,\infty),x_i)$ intersect, and any
$q\in I_1\cap I_2$ gives $\delta(x_1)^2=\Psi(q)=\delta(x_2)^2$. Thus $\delta^2$ is
constant, so by continuity on the interval $J$ so is $\delta$, and
$\delta\equiv\Delta\neq0$ as above.
\end{proof}

\begin{remark}\label{rmk:local}
Let $\eta:=\inf\{t\geq0:P_t\in\{0,1\}\}$. By \eqref{eq:posterior-odds},
$P_t\in(0,1)$ for $t<\zeta$, and $P$ is constant after $\zeta$, since the
absorbed signal reveals no further information. Hence $\zeta\leq\eta$ a.s.,
with $\eta=\zeta$ if and only if $P_\zeta\in\{0,1\}$ on $\{\zeta<\infty\}$.
As an autonomous win-martingale is absorbed only upon reaching $\{0,1\}$,
Corollary \ref{cor:classification} determines when the dynamics of
Theorem \ref{thm:classification} extend accordingly.
\end{remark}

\begin{corollary}\label{cor:classification}
$P$ is an autonomous win-martingale if and only if either \emph{(i)} $\kappa=0$ and
$\gamma(J)=(0,1)$, or \emph{(ii)} $\kappa\neq0$, $\delta$ is constant, and
$J=\mathbb R$.
\end{corollary}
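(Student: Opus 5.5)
The plan is to combine Theorem \ref{thm:classification} with Remark \ref{rmk:local}. Being an autonomous win-martingale means two things. First, $P$ satisfies $dP_t=\sigma(P_t)\,dB_t$ with $\sigma\in C^1((0,1);(0,\infty))$ before absorption. Second, absorption happens exactly when $P$ hits $\{0,1\}$, that is $\zeta=\eta$, or equivalently $P_\zeta\in\{0,1\}$ on $\{\zeta<\infty\}$. I also have to make sure that $P$ does not freeze at an interior value at $\zeta$. On $\{\zeta=\infty\}$, Lemma \ref{lem:pi-well-defined} already forces convergence to $\{0,1\}$ once the dynamics hold. So the question reduces to two checks: the dynamics hold on $\llbracket0,\zeta\llbracket$, and no absorption of $X$ occurs while $P$ is interior. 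I split according to $\kappa$.

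\emph{Case $\kappa=0$.} By Theorem \ref{thm:classification}(i), $P=\gamma(X)$ with $\gamma=H(\logit p+A)$. The option $\delta\equiv0$ gives $P\equiv p$, which is not a win-martingale, and it also has $\gamma(J)=\{p\}\neq(0,1)$. So assume $\delta\neq0$. Then $\gamma$ is a monotone $C^2$-diffeomorphism, and $\sigma(q):=q(1-q)|\delta(f(q))|$ is $C^1$ and positive on $\gamma(J)$. By continuity in the order topology, $P_\zeta=\gamma(X_\zeta)$, where $\gamma$ is extended to the endpoints by monotone limits.

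For sufficiency, if $\gamma(J)=(0,1)$ then the endpoint limits are $0$ and $1$. Hence $P_\zeta\in\{0,1\}$ on $\{\zeta<\infty\}$, and the dynamics extend with $B=\operatorname{sgn}(\delta)\overline W$. For necessity, suppose $\gamma(J)=(a,b)\subsetneq(0,1)$, say $b<1$. The main obstacle is to show that $P$ actually attains a hitting value strictly inside $(0,1)$, or otherwise misbehaves. I will argue as follows. Since $P$ is a win-martingale, $P_\infty\in\{0,1\}$ a.s. and $\widetilde{\mathbb P}(P_\infty=1)=p>0$ by Lemma \ref{lem:pi-well-defined}. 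But $P_t\le b$ for all $t$, so $P_\infty\le b<1$, which is a contradiction. The case $a>0$ is symmetric. Equivalently, $\sigma$ must be defined on all of $(0,1)$ and $P$ must be able to reach both endpoints.

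\emph{Case $\kappa\neq0$.} Theorem \ref{thm:classification}(ii) shows that the dynamics of \eqref{eq:intro-target} before $\zeta$ force $\delta\equiv\Delta\neq0$. Then the Riccati equation \eqref{eq:riccati} gives $\mu_0=-(\kappa+\Delta^2)/(2\Delta)$, a constant, so $X$ is Brownian motion with constant drift.

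If $J=\mathbb R$, then $\zeta=\infty$, because Brownian motion with drift does not explode. In that case $P$ satisfies \eqref{eq:intro-target} for all $t$ with $\sigma(q)=|\Delta|q(1-q)$; by Lemma \ref{lem:pi-well-defined}, or by Remark \ref{rmk:consistency} since $\int_0^\infty\Delta^2\,ds=\infty$, $P_t\to\theta\in\{0,1\}$.

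If instead $J$ has a finite endpoint, say $r<\infty$, then $X$ hits $r$ in finite time with positive probability. This follows, for instance, from Lemma \ref{lem:stopped-full-support} and the Girsanov comparison with Brownian motion. At that time, by \eqref{eq:current-state-map}, $P_\zeta=H(\logit p+A(r)+\kappa\zeta/2)\in(0,1)$, because $A(r)=\Delta(r-x_0)$ is finite. So $P$ is absorbed at an interior point, which contradicts Remark \ref{rmk:local}.

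Finally, I still need to exclude $\kappa\neq0$ with nonconstant $\delta$. This is exactly the "only if" part of Theorem \ref{thm:classification}(ii), since an autonomous win-martingale satisfies \eqref{eq:intro-target} before $\zeta$. This assembles the stated equivalence.
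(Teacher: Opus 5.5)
Your decomposition matches the paper's. You discard $\delta\equiv0$, treat $\kappa=0$ through $P=\gamma(X)$ and the range $\gamma(J)$, and treat $\kappa\neq0$ through Theorem \ref{thm:classification}(ii), constancy of $\mu_0,\mu_1$ from \eqref{eq:riccati}, and absorption of $X$ at a finite endpoint with positive probability. The necessity arguments are sound. You should, however, state that $P$ is continuous on all of $[0,\infty)$ \emph{because it is assumed to solve} \eqref{eq:intro-target}. That is what justifies $P_t\le b$ for $t\geq\zeta$, and $P_\zeta=H(\logit p+A(r)+\kappa\zeta/2)\in(0,1)$ when $\kappa\neq0$.

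The gap is in the sufficiency half of (i): the step ``by continuity in the order topology, $P_\zeta=\gamma(X_\zeta)$'' does not follow. The identity $P=\gamma(X)$ is only known on $\llbracket0,\zeta\llbracket$, and $P$ is just the c\`adl\`ag version of $\widetilde{\mathbb E}[\theta\mid\mathcal F^X_t]$. Continuity of $X$ and of the extended $\gamma$ therefore gives only $P_{\zeta-}=\gamma(X_\zeta)\in\{0,1\}$ on $\{\zeta<\infty\}$. Ruling out a jump at $\zeta$ is exactly what you need to get $P_\zeta\in\{0,1\}$, i.e.\ that $P$ is absorbed at $\{0,1\}$. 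In this direction you cannot invoke continuity of $P$, because it is part of what is being proved.

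The paper closes this with a short martingale argument. Optional sampling at $\tau_{K_n}\uparrow\zeta$ and L\'evy's upward theorem give $P_{\zeta-}=\widetilde{\mathbb E}\bigl[P_\zeta\mid\bigvee_n\mathcal F^X_{\tau_{K_n}}\bigr]$ on $\{\zeta<\infty\}$. A $[0,1]$--valued variable whose conditional expectation lies in $\{0,1\}$ must equal it, so $P_\zeta=P_{\zeta-}$. Add this line, use Remark \ref{rmk:local} for constancy after $\zeta$, and extend $B=\operatorname{sgn}(\delta)\overline W$ past $\zeta$; your argument is then complete.
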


\begin{proof}
If $\delta\equiv0$, then $P\equiv p$ has zero quadratic variation and
$\gamma(J)=\{p\}$, so both sides of (i) fail. Assume $\delta\not\equiv0$ and set
$B:=\operatorname{sgn}(\delta)\overline W$, extended past $\zeta$ if necessary.

(i) Let $\kappa=0$ and $I:=\gamma(J)$. If $I=(0,1)$, then
$\sigma(q):=q(1-q)|\delta(f(q))|$ lies in $C^1((0,1);(0,\infty))$ and
\eqref{eq:Pi-autonomous-zero-kappa} reads $dP_t=\sigma(P_t)\,dB_t$ for $t<\zeta$.
Moreover, $\gamma$ extends continuously to $\overline J$ with endpoint values
$\{0,1\}$, so $P_{\zeta-}\in\{0,1\}$ on $\{\zeta<\infty\}$. Optional sampling at
$\tau_{K_n}\uparrow\zeta$ and L\'evy's upward theorem give
$P_{\zeta-}=\widetilde{\mathbb E}\bigl[P_\zeta\mid\bigvee_n\mathcal F^X_{\tau_{K_n}}\bigr]$,
and since $P_{\zeta-}\in\{0,1\}$ while $P_\zeta\in[0,1]$, $P_\zeta=P_{\zeta-}$.
By Remark \ref{rmk:local}, $P$ is constant on $\llbracket\zeta,\infty\llbracket$,
so $P$ satisfies \eqref{eq:intro-target} with absorption at $\{0,1\}$. Conversely,
if $P$ satisfies \eqref{eq:intro-target}, Lemma \ref{lem:pi-well-defined} gives
$P_\infty\in\{0,1\}$ a.s.\ with both values of positive probability, while
continuity of $P$, $P_t\in I$ for $t<\zeta$, and Remark \ref{rmk:local} give
$P_t\in\overline I$ for all $t\geq0$; hence $\{0,1\}\subseteq\overline I$ and
$I=(0,1)$.

(ii) Let $\kappa\neq0$. If $P$ satisfies \eqref{eq:intro-target}, then
Theorem \ref{thm:classification} gives $\delta\equiv\Delta\neq0$, and
\eqref{eq:riccati} makes $\mu_0$ and $\mu_1$ constant, so under each
$\widetilde{\mathbb Q}^i$ the signal is a Brownian motion with constant drift
stopped at $\partial J$. If $J\neq\mathbb R$, then
$\widetilde{\mathbb P}(\zeta<\infty)>0$, since such a process reaches any finite
level with positive probability. On $\{\zeta<\infty\}$, non-explosion forces
$X_\zeta$ to be finite, so
$P_\infty=P_\zeta\in(0,1)$ by \eqref{eq:current-state-map}, continuity, and Remark
\ref{rmk:local}, contradicting $P_\infty\in\{0,1\}$ a.s. Hence $J=\mathbb R$.
Conversely, if $\delta\equiv\Delta$ and $J=\mathbb R$, then $\zeta=\infty$ a.s.\
and the dynamics are as in \eqref{eq:intro-target} with
$\sigma(q)=|\Delta|q(1-q)$.
\end{proof}

\color{black}

\begin{remark}\label{rmk:integral-criterion}
For $\kappa=0$ the condition $\gamma(J)=(0,1)$
holds if and only if
 $\int_\ell^{x_0}|\delta(u)|\,du=\int_{x_0}^{r}|\delta(u)|\,du=\infty$, which is a
state-space counterpart of the condition in Remark
\ref{rmk:consistency}.
\end{remark}

The $\kappa=0$ branch of Theorem \ref{thm:classification} can be restated via Doob
transforms by $\mathcal L_0$--harmonic functions, which are precisely the affine
functions of the null scale function $s_0$ from the proof of Lemma \ref{lem:signal-well-posed}. This suggests an inverse design. Given a null drift, the following corollary produces alternatives with time-independent autonomous posterior dynamics. For
$\kappa\neq0$ no such freedom exists as by Theorem \ref{thm:classification},
autonomy forces $\delta\equiv\Delta$, and then \eqref{eq:riccati} forces $\mu_0$
and $\mu_1$ to be constants.

\begin{corollary}\label{cor:inverse-design}
There exists $\gamma\in C(J;(0,1))$ with $P_t=\gamma(X_t)$ for $t<\zeta$ if and
only if $\delta=h'/h$ for a $\mathcal L_0$--harmonic
$h\in C^2(J;(0,\infty))$. Moreover, the nonvanishing solutions of
\eqref{eq:riccati} with $\kappa=0$ are $\delta=s_0'/(C+s_0)$, $C\in\mathbb R$,
on any subinterval of $J$ on which $C+s_0\neq0$.
\end{corollary}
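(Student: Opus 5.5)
The first claim follows by chaining Theorem \ref{thm:classification}(i) with Corollary \ref{cor:h-transform} specialized to $\kappa=0$. The existence of $\gamma\in C(J;(0,1))$ with $P_t=\gamma(X_t)$ implies, in particular, that $P$ is a C-S posterior: take $\Gamma(t,x)=\gamma(x)$. We must check that this $\Gamma$ has the regularity demanded in Definition \ref{def:current-state}. Rather than checking this directly, I would argue as follows. Both characterizations presuppose the C-S setting of this subsection, in which \eqref{eq:riccati} holds and $\Gamma$ is given by \eqref{eq:current-state-map}. Theorem \ref{thm:classification}(i) then says that existence of $\gamma$ is equivalent to $\kappa=0$. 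Corollary \ref{cor:h-transform} with $\kappa=0$ says that \eqref{eq:riccati} with $\kappa=0$ is equivalent to $\delta=h'/h$ with $h=Ce^{A}>0$ and $\mathcal L_0h=-\kappa h/2=0$, i.e.\ $h$ is $\mathcal L_0$--harmonic. For the converse, suppose $\delta=h'/h$ with $h$ positive, $C^2$ and $\mathcal L_0$--harmonic. The computation $2\mathcal L_0h/h=\delta'+2\mu_0\delta+\delta^2$ from the proof of Corollary \ref{cor:h-transform} gives \eqref{eq:riccati} with $\kappa=0$. Theorem \ref{thm:recognition} then makes $P$ a C-S posterior, and \eqref{eq:current-state-map} with $\kappa=0$ yields $\gamma=H(\logit p+A(\cdot))\in C(J;(0,1))$. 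This direction is self-contained and avoids any subtlety about the standing assumption.

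For the forward direction without the standing assumption, I would redo the argument of Theorem \ref{thm:recognition}. If $\gamma$ is only continuous, It\^o's formula is not directly available. I would bypass this using Proposition \ref{prop:likelihood-posterior}: $\logit P_t=\logit p+\log L_t$, so $\log L_t=\logit\gamma(X_t)-\logit p$ is a continuous function of $X_t$. On the other hand, $\log L_t-A(X_t)=\int_0^t\bigl(-\tfrac12\delta'-\mu_0\delta-\tfrac12\delta^2\bigr)(X_s)\,ds$ by It\^o's formula applied to $A$. Hence $g(X_t):=\logit\gamma(X_t)-\logit p-A(X_t)$ is a process of finite variation. A continuous function of a semimartingale with unit quadratic variation that has finite variation must be constant along the support. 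I expect this step to be the main obstacle. I would handle it by comparing with the Brownian motion under the equivalent measure of Lemma \ref{lem:stopped-full-support}: $g(X)$ being of bounded variation forces $g$ constant on $\operatorname{int}K$, for instance via a Lévy-modulus/oscillation argument, or by approximating $g$. Then $g\equiv g(x_0)=0$ on $J$, so the $ds$-integral vanishes identically. By full support and continuity, this gives \eqref{eq:riccati} with $\kappa=0$.

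For the second claim, set $\delta=h'/h$. With $h$ harmonic, $h=C_1+C_2s_0$ since harmonic functions are affine in $s_0$: indeed $(h'/s_0')'=0$ because $s_0''=-2\mu_0s_0'$. A nonvanishing $\delta$ requires $C_2\neq0$, and dividing through gives $\delta=s_0'/(C+s_0)$. Conversely, on any subinterval where $C+s_0\neq0$, a direct check shows that $\delta=s_0'/(C+s_0)$ satisfies $\delta'=-2\mu_0\delta-\delta^2$. Every nonvanishing solution has this form, because $u=1/\delta$ linearizes the Riccati equation to $u'=2\mu_0u+1$, i.e.\ $(u/s_0')'\cdot s_0'=1$ up to sign bookkeeping. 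Integrating gives $u=(C+s_0)/s_0'$.
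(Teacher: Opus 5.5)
Your main line is the paper's proof. Under the subsection's standing assumption that $P$ is a C-S posterior, Theorem~\ref{thm:classification}(i) turns existence of $\gamma$ into $\kappa=0$. The identity $2\mathcal L_0h/h=\delta'+2\mu_0\delta+\delta^2$ turns $\kappa=0$ into $\mathcal L_0$--harmonicity of $h=Ce^{A}$. The second claim then follows by linearizing with $u=1/\delta$.

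Your alternative for the second claim via $h=C_1+C_2s_0$ is also valid: on a subinterval, apply it to the positive harmonic function $h=\exp\int\delta$. It actually gives completeness on its own. There is one slip in the $u$-route: the integrating factor is $s_0'$, not $1/s_0'$. The correct identity is $(s_0'u)'=s_0'(u'-2\mu_0u)=s_0'$, which integrates to $s_0'u=C+s_0$. Your $(u/s_0')'\,s_0'=1$ is really $u'+2\mu_0u=1$, and it does not integrate to the stated formula.

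The paragraph that drops the standing assumption goes beyond what the paper proves, and as written it does not close. The claim that a merely continuous $g$ with $g(X)$ of finite variation must be constant is true but genuinely nontrivial. A L\'evy-modulus or approximation remark does not establish it.

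You can avoid that claim by using structure you already have: $g(X_t)=\int_0^tq(X_s)\,ds$ with $q:=-\tfrac12(\delta'+2\mu_0\delta+\delta^2)$ continuous. Fix $t$ and work on $\{t<\tau_K\}$. The Markov property under the equivalent Brownian measure of Lemma~\ref{lem:stopped-full-support} gives, almost surely, times $t_n\downarrow t$ with $X_{t_n}=X_t$. Then $\int_t^{t_n}q(X_s)\,ds=g(X_{t_n})-g(X_t)=0$, and dividing by $t_n-t$ yields $q(X_t)=0$. Full support and continuity give $q\equiv0$ on $J$, i.e.\ \eqref{eq:riccati} with $\kappa=0$. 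Theorem~\ref{thm:recognition} then supplies $\gamma=H(\logit p+A)$.
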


\begin{proof}
By the identity $2\mathcal L_0h/h=\delta'+2\mu_0\delta+\delta^2$ (cf.~Corollary \ref{cor:h-transform}), such an $h$ exists if and only if \eqref{eq:riccati} holds with $\kappa=0$. The first claim then follows from Theorem \ref{thm:classification}. For the second, set $Y=1/\delta$. Then \eqref{eq:riccati} with $\kappa=0$ becomes $Y'-2\mu_0Y=1$. Since $s_0''=-2\mu_0s_0'$ this reads $(s_0'Y)'=s_0'$, whose solutions are $s_0'Y=C+s_0$; equivalently, $\delta=s_0'/(C+s_0)$.
\end{proof}

Taken together, the results complete the one-dimensional link between posterior martingale diffusions and binary diffusion experiments. On the other hand, extensions to higher dimensions with multiple hypotheses (cf.~\cite{bayraktar-wang2025, ekstrom2022multi}) no longer have access to the same rich scalar theory and so appear to require different geometric tools.

\section{Examples}\label{sec:examples}

\begin{table}[t]
    \centering
    \caption{Autonomous win-martingales and their associated binary experiments.}
    \label{tab:examples}
    \footnotesize
    \renewcommand{\arraystretch}{1.5}
    \setlength{\tabcolsep}{3.5pt}
    \begin{tabular}{@{}ccccc@{}}
        \hline
        $\sigma(p)$
        &
        $F(p)$
        &
        $\mu_0(x)$
        &
        $\mu_1(x)$
        &
        $\mu_{\mathrm{mix}}(x)$
        \\
        \hline

        $\displaystyle p(1-p)$
        &
        $\displaystyle \logit(p)$%
        &
        $\displaystyle -1/2$
        &
        $\displaystyle 1/2$
        &
        $\displaystyle \tanh\left(x/2\right)/2$
        \\[1.5ex]

        $\displaystyle 1$
        &
        $\displaystyle p-\frac12$
        &
        $\displaystyle -\frac{1}{1/2-x}$
        &
        $\displaystyle \frac{1}{x+1/2}$
        &
        $\displaystyle 0$
        \\[2ex]

        $\displaystyle \sqrt{p(1-p)}$
        &
        $\displaystyle 2\arcsin\sqrt p-\frac{\pi}{2}$
        &
        $\displaystyle -\sec x-\frac12\tan x$
        &
        $\displaystyle \sec x-\frac12\tan x$
        &
        $\displaystyle \frac12\tan x$
        \\[2ex]

        $\displaystyle \frac{\sin(\pi p)}{\pi}$
        &
        $\displaystyle \log\tan\left(\frac{\pi p}{2}\right)$
        &
        $\displaystyle
        \frac12\tanh x
        -\frac{\operatorname{sech}x}{\arccos(\tanh x)}$
        &
        $\displaystyle
        \frac12\tanh x
        +\frac{\operatorname{sech}x}{\arccos(-\tanh x)}$
        &
        $\displaystyle \frac12\tanh x$
        \\[2.5ex]

        $\displaystyle (1-p)^2$
        &
        $\displaystyle \frac{p}{1-p}$
        &
        $\displaystyle 0$
        &
        $\displaystyle \frac1x$
        &
        $\displaystyle \frac{1}{1+x}$
        \\[2ex]

        $\displaystyle \varphi\bigl(\Phi^{-1}(p)\bigr)$
        &
        $\displaystyle \Phi^{-1}(p)$
        &
        $\displaystyle
        \frac{x}{2}
        -\frac{\varphi(x)}{1-\Phi(x)}$
        &
        $\displaystyle
        \frac{x}{2}
        +\frac{\varphi(x)}{\Phi(x)}$
        &
        $\displaystyle \frac{x}{2}$
        \\
        \hline
    \end{tabular}
\end{table}

We now illustrate the embedding and its converse in the examples of Table \ref{tab:examples}, where $\varphi$ and $\Phi$ are the standard normal density and distribution function. Each row can be read either as a given $\sigma$ generating an experiment via Theorem \ref{thm:posterior-embedding}, or as a prescribed $\mu_0$ generating an autonomous win-martingale via Corollaries \ref{cor:classification} and \ref{cor:inverse-design}. 
The final column records the drift of $X$ under $\widetilde{\mathbb P}$, namely $\mu_{\mathrm{mix}}(x)=\widehat\mu\bigl(x,\gamma(x)\bigr) =-\tfrac12\sigma'(G(x))$, in accordance with \eqref{eq:Y-under-P}, since $\operatorname{Law}_{\widetilde{\mathbb P}}(X) =\operatorname{Law}_{\mathbb P}(F(\Pi))$.

The classical sequential testing martingale, $\sigma(p)=p(1-p)$, arises from testing between Brownian motions with drifts $\mp1/2$ \cite{shiryaev1967two}. The textbook presentation with drifts $(0,1)$ differs by the shift $X\mapsto X+t/2$, which changes $\kappa$ from $0$ to $-1$ but not the underlying experiment; cf.\ Remark \ref{rmk:scope}.

The case $\sigma\equiv1$ gives a Brownian motion absorbed at $0$ and $1$ in almost surely finite time. Under $\widetilde{\mathbb Q}^0$ and $\widetilde{\mathbb Q}^1$, respectively, $1/2-X$ and $X+1/2$ are three-dimensional Bessel processes. Hence the boundary corresponding to the incorrect hypothesis is unattainable, while the correct one is reached in finite time, revealing $\theta$ and absorbing $P$, when $X$ exits $J=(-1/2,1/2)$. Under the mixture law, $X$ is a Brownian motion up to $\zeta$.

The Wright--Fisher win-martingale, $\sigma(p)=\sqrt{p(1-p)}$, is likewise absorbed at $0$ or $1$ in finite time. The associated experiment has the bounded state space $J=(-\pi/2,\pi/2)$, whose boundary the observation reaches and is absorbed at under the mixture law. The fourth row gives the time-homogeneous Aldous martingale. The identification with the most exciting game of \cite{backhoff-beiglbock} follows from the time change $s=1-e^{-t}$.

The fifth row illustrates Corollary \ref{cor:inverse-design}: for $\mu_0\equiv0$ on $J=(0,\infty)$, $s_0(x)=x-x_0$ and the choice $C=x_0$ give $\delta(x)=1/x$. The null is thus a Brownian motion absorbed at $0$ and the alternative a three-dimensional Bessel process. Here $\sigma(p)=(1-p)^2$ and $f=F$, the Lamperti constant in \eqref{eq:F-def} being $1$. The posterior reaches $0$ in finite time on $\{\theta=0\}$ but approaches $1$ only asymptotically on $\{\theta=1\}$. As a result, certainty of the alternative is never attained in finite time.

The final row constructs the experiment from the mixture law and illustrates the Doob $h$--transform connection. Under $\widetilde{\mathbb P}$, $X$ is an unstable Ornstein--Uhlenbeck process. Since
$e^{-t/2}X_t=x_0+\int_0^te^{-s/2}\,d\overline W_s$ converges almost surely to a nondegenerate Gaussian limit, $X_t\to\pm\infty$ a.s.\ and $\widetilde{\mathbb P}\bigl(\lim_{t\to\infty}X_t=\infty\mid\mathcal F^X_t\bigr)=\Phi(X_t)$. The functions $h_1:=\Phi$ and $h_0:=1-\Phi$ are harmonic for $\mathcal Lf:=\mu_{\mathrm{mix}}f'+\tfrac12f''$, so by \eqref{eq:doob-conditioning} their Doob transforms are the laws of $X$ conditioned on $\{\lim_{t\to\infty}X_t=\pm\infty\}$. Taking these as $\widetilde{\mathbb Q}^1$ and $\widetilde{\mathbb Q}^0$, with prior $p=\Phi(x_0)$, mirrors \eqref{eq:embedding-terminal-densities} and yields the drifts $\mu_i=x/2+h_i'/h_i$ of Table \ref{tab:examples}, together with $d\widetilde{\mathbb Q}^1/d\widetilde{\mathbb Q}^0|_{\mathcal F^X_t} =h(X_t)/h(x_0)$ for $h:=h_1/h_0$. Since $\mathcal L_0f=\mathcal L(h_0f)/h_0$ and $\mathcal Lh_1=0$, $h$ is $\mathcal L_0$--harmonic with $\delta=h'/h$, so Corollaries \ref{cor:inverse-design} and \ref{cor:classification} show that the posterior is the autonomous win-martingale with $\sigma(p)=\varphi(\Phi^{-1}(p))$ and $\gamma=\Phi$. This becomes the Bass martingale \cite{backhoff2025bass} after the same deterministic time change $s=1-e^{-t}$.

In sequential testing, stopping rules are often given by the first exit of the posterior from an interval. That is, reject the null when $P_t$ hits $\beta$, and reject the alternative when $P_t$ hits $\alpha$, for $0<\alpha<p<\beta<1$. When the posterior is an autonomous win-martingale, the corresponding error probabilities and expected decision time admit particularly simple expressions. The following result collects these formulas for a general posterior-threshold rule.

\begin{corollary}[Posterior thresholds]\label{cor:thresholds}
    Let $0<\alpha<p<\beta<1$ and $\tau_{\alpha,\beta}$ be the first exit time
    \(
        \tau_{\alpha,\beta}
        =\inf\{t\geq 0:P_t\notin(\alpha,\beta)\},
    \)
    where $P$ is the posterior for a binary experiment. Assume $P$ is an autonomous win-martingale with $P_0=p$. Then $\tau_{\alpha,\beta}<\infty$ a.s.~and
    \begin{equation}\label{eq:threshold-hitting}
        \widetilde{\mathbb P}(P_{\tau_{\alpha,\beta}}=\beta)
        =\frac{p-\alpha}{\beta-\alpha},
        \qquad
        \widetilde{\mathbb P}(P_{\tau_{\alpha,\beta}}=\alpha)
        =\frac{\beta-p}{\beta-\alpha}.
    \end{equation}
    If the decision is $1$ at $\beta$ and $0$ at $\alpha$, then
    \begin{align}
        \widetilde{\mathbb Q}^0(\textnormal{decide }1)
        &=\frac{1-\beta}{1-p}\frac{p-\alpha}{\beta-\alpha},
        \qquad
        \widetilde{\mathbb Q}^1(\textnormal{decide }0)
        =\frac{\alpha}{p}\frac{\beta-p}{\beta-\alpha}.
    \label{eq:threshold-errors}
    \end{align}
    The expected decision time under the marginal law is
    \begin{align}
        \widetilde{\mathbb E}[\tau_{\alpha,\beta}]
        =2\bigg[&
        \frac{\beta-p}{\beta-\alpha}
        \int_\alpha^p\frac{y-\alpha}{\sigma^2(y)}\,dy
        +\frac{p-\alpha}{\beta-\alpha}
        \int_p^\beta\frac{\beta-y}{\sigma^2(y)}\,dy
        \bigg].
    \label{eq:threshold-time}
    \end{align}
\end{corollary}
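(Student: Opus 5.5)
The plan is to treat $P$ as a one-dimensional diffusion in natural scale on $(\alpha,\beta)$ and apply the standard scale-function/Green-function machinery, transporting the error probabilities to $\widetilde{\mathbb Q}^i$ through the densities $P_t/p$ and $(1-P_t)/(1-p)$.

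\emph{Step 1: finiteness of the exit time.} Since $P$ solves \eqref{eq:intro-target} with $\sigma>0$ continuous on $(0,1)$, $\sigma$ is bounded below by some $c>0$ on $[\alpha,\beta]$. Hence $\langle P\rangle_{t\wedge\tau_{\alpha,\beta}}\ge c^2(t\wedge\tau_{\alpha,\beta})$, and this quadratic variation is bounded by $(\beta-\alpha)^2$ in expectation because $\widetilde{\mathbb E}[(P_{t\wedge\tau}-p)^2]\le(\beta-\alpha)^2$. Letting $t\to\infty$ gives $\widetilde{\mathbb E}[\tau_{\alpha,\beta}]<\infty$, so in particular $\tau_{\alpha,\beta}<\infty$ a.s. Alternatively, Lemma \ref{lem:pi-well-defined} gives $P_\infty\in\{0,1\}$, which forces an exit from $(\alpha,\beta)$.

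\emph{Step 2: hitting probabilities.} The stopped process is a bounded martingale, so optional sampling gives $p=\beta\,\widetilde{\mathbb P}(P_\tau=\beta)+\alpha(1-\widetilde{\mathbb P}(P_\tau=\beta))$. By continuity $P_\tau\in\{\alpha,\beta\}$, and \eqref{eq:threshold-hitting} follows.

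\emph{Step 3: error probabilities.} The event $\{P_\tau=\beta\}$ lies in $\mathcal F^X_\tau$. By definition of the posterior and optional sampling,
\[
\widetilde{\mathbb Q}^0(P_\tau=\beta)=\widetilde{\mathbb E}\bigl[(1-\theta)\mathbf 1_{\{P_\tau=\beta\}}\bigr]/(1-p)=\widetilde{\mathbb E}\bigl[(1-P_\tau)\mathbf 1_{\{P_\tau=\beta\}}\bigr]/(1-p).
\]
This equals $(1-\beta)\widetilde{\mathbb P}(P_\tau=\beta)/(1-p)$. The symmetric computation, using $\theta$ and dividing by $p$, gives the second formula in \eqref{eq:threshold-errors}. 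The only subtlety is conditioning at a stopping time, which is handled by optional sampling for the closed martingale $P_t=\widetilde{\mathbb E}[\theta\mid\mathcal F^X_t]$.

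\emph{Step 4: expected time, the main work.} Let $g$ be the Green-function solution of $\tfrac12\sigma^2 g''=-1$ on $(\alpha,\beta)$ with $g(\alpha)=g(\beta)=0$, namely
\[
g(x)=2\int_\alpha^\beta G_{\alpha,\beta}(x,y)\sigma^{-2}(y)\,dy,\qquad G_{\alpha,\beta}(x,y)=\frac{(x\wedge y-\alpha)(\beta-x\vee y)}{\beta-\alpha}.
\]
This $g$ is $C^2$ on $(\alpha,\beta)$ and continuous on $[\alpha,\beta]$. To handle the endpoints, apply Itô's formula to $g(P_{t\wedge\tau_n})$, where $\tau_n$ is the exit time from $[\alpha+1/n,\beta-1/n]$. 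This gives $\widetilde{\mathbb E}[g(P_{t\wedge\tau_n})]=g(p)-\widetilde{\mathbb E}[t\wedge\tau_n]$. Let $n\to\infty$ and then $t\to\infty$, using monotone convergence for the time term and bounded convergence for $g$ together with $g(\alpha)=g(\beta)=0$. This yields $\widetilde{\mathbb E}[\tau_{\alpha,\beta}]=g(p)$. Splitting the integral at $y=p$ gives exactly \eqref{eq:threshold-time}.

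The main obstacle is justifying this limit, specifically the boundedness of $g$ on $[\alpha,\beta]$. It holds because $\sigma^{-2}$ is bounded there, the thresholds lying strictly inside $(0,1)$.
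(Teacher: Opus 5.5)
Your proposal is correct and follows the paper's proof. The paper cites the natural-scale exit results of Karatzas--Shreve (Section 5.5.C) for the finiteness of $\tau_{\alpha,\beta}$, the hitting probabilities, and the Green-function formula for $\widetilde{\mathbb E}[\tau_{\alpha,\beta}]$, and you derive those same facts directly through the quadratic-variation bound, optional sampling, and It\^o's formula applied to $g$; your Step 3 is the paper's tower-property argument, with the optional-sampling justification of $\widetilde{\mathbb E}[\theta\mid\mathcal F^X_{\tau_{\alpha,\beta}}]=P_{\tau_{\alpha,\beta}}$ made explicit.
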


\begin{proof}
    The almost-sure finiteness of $\tau_{\alpha,\beta}$ and equations \eqref{eq:threshold-hitting} and \eqref{eq:threshold-time} follow from \cite[Section 5.5.C]{karatzas-shreve}. Since $p=\widetilde{\mathbb P}(\theta=1)$ and $\widetilde{\mathbb Q}^1=\widetilde{\mathbb P}(\cdot\mid\theta=1)$ we have $p\widetilde{\mathbb Q}^1(A)=\widetilde{\mathbb E}[\mathbb 1_{A}\theta]$ for $A\in\widetilde{\mathcal{F}}$. Thus, by the tower property we have
    \begin{equation*}
        p\widetilde{\mathbb Q}^1(P_{\tau_{\alpha,\beta}}=\alpha)=\widetilde{\mathbb E}[\mathbb 1_{\{P_{\tau_{\alpha,\beta}}=\alpha\}}\theta]=\widetilde{\mathbb E}[\mathbb 1_{\{P_{\tau_{\alpha,\beta}}=\alpha\}}P_{\tau_{\alpha,\beta}}]=\alpha\widetilde{\mathbb P}(P_{\tau_{\alpha,\beta}}=\alpha),
    \end{equation*}
    which shows the second part of \eqref{eq:threshold-errors}. The first part is similar.
\end{proof}

For fixed threshold strategies, \eqref{eq:threshold-errors} shows that the type I and type II error probabilities depend on the thresholds, but not directly on the volatility of the posterior. However, the optimal thresholds generally depend on the particular diffusion. Consequently, both the mean decision time and the distribution of the resulting decision may depend on the posterior volatility when the thresholds are chosen optimally. The associated sequential inference and optimal stopping problems are studied further in forthcoming work by the authors.

\section{Conclusion}

Taken together, the results complete the one-dimensional link between posterior martingale diffusions and binary diffusion experiments. The embedding realizes any autonomous win-martingale by placing an absorbed diffusion in Lamperti coordinates and conditioning on its terminal outcome. The converse identifies when the posterior is a Markovian function of the current time and observation through a single compatibility condition, expressed either as a Riccati equation for the drift gap or as a space-time harmonic representation of the likelihood ratio. The spatially harmonic case recovers precisely the family produced by the embedding while outside it, autonomy forces both signal drifts to be constant and returns the classical binary-testing filter. Extensions to higher dimensions with multiple hypotheses (cf.~\cite{bayraktar-wang2025, caffarelli1981sequential, ekstrom2022multi}) no longer have access to the same rich scalar theory and so appear to require different geometric tools.

\bibliographystyle{amsplain}
\bibliography{refs}